\begin{document}

\newtheorem{theorem}{Theorem}    
\newtheorem{proposition}[theorem]{Proposition}
\newtheorem{conjecture}[theorem]{Conjecture}
\def\theconjecture{\unskip}
\newtheorem{corollary}[theorem]{Corollary}
\newtheorem{lemma}[theorem]{Lemma}
\newtheorem{sublemma}[theorem]{Sublemma}
\newtheorem{observation}[theorem]{Observation}
\theoremstyle{definition}
\newtheorem{definition}{Definition}
\newtheorem{notation}[definition]{Notation}
\newtheorem{remark}[definition]{Remark}
\newtheorem{question}[definition]{Question}
\newtheorem{questions}[definition]{Questions}
\newtheorem{example}[definition]{Example}
\newtheorem{problem}[definition]{Problem}
\newtheorem{exercise}[definition]{Exercise}
\newtheorem*{tha}{Theorem A}
\newtheorem*{thb}{Theorem B}
\numberwithin{theorem}{section}
\numberwithin{definition}{section}
\numberwithin{equation}{section}
\def\cic{\mathbf}

\def\earrow{{\mathbf e}}
\def\rarrow{{\mathbf r}}
\def\uarrow{{\mathbf u}}
\def\varrow{{\mathbf V}}
\def\tpar{T_{\rm par}}
\def\apar{A_{\rm par}}

\def\reals{{\mathbb R}}
\def\torus{{\mathbb T}}
\def\heis{{\mathbb H}}
\def\integers{{\mathbb Z}}
\def\naturals{{\mathbb N}}
\def\complex{{\mathbb C}\/}
\def\distance{\operatorname{distance}\,}
\def\support{\operatorname{support}\,}
\def\dist{\operatorname{dist}\,}
\def\Span{\operatorname{span}\,}
\def\degree{\operatorname{degree}\,}
\def\kernel{\operatorname{kernel}\,}
\def\dim{\operatorname{dim}\,}
\def\codim{\operatorname{codim}}
\def\trace{\operatorname{trace\,}}
\def\Span{\operatorname{span}\,}
\def\dimension{\operatorname{dimension}\,}
\def\codimension{\operatorname{codimension}\,}
\def\nullspace{\scriptk}
\def\kernel{\operatorname{Ker}}
\def\ZZ{ {\mathbb Z} }
\def\p{\partial}
\def\rp{{ ^{-1} }}
\def\Re{\operatorname{Re\,} }
\def\Im{\operatorname{Im\,} }
\def\ov{\overline}
\def\eps{\varepsilon}
\def\lt{L^2}
\def\diver{\operatorname{div}}
\def\curl{\operatorname{curl}}
\def\etta{\eta}
\newcommand{\norm}[1]{ \|  #1 \|}
\def\expect{\mathbb E}
\def\bull{$\bullet$\ }

\def\xone{x_1}
\def\xtwo{x_2}
\def\xq{x_2+x_1^2}
\newcommand{\abr}[1]{ \langle  #1 \rangle}
\def\Z {{\mathbb Z}}
\def \l {\langle}
\def \r {\rangle}
\def \pt {\partial_t}
\def \ptt {\partial_{tt}}
\def \ps {\partial_s}
\def \and{\qquad\text{and}\qquad}
\def\mb{{ \scriptscriptstyle\bullet}}
\def\st {\,:\,}

\newcommand{\Norm}[1]{ \left\|  #1 \right\| }
\newcommand{\set}[1]{ \left\{ #1 \right\} }
\def\one{\mathbf 1}
\def\whole{\mathbf V}
\newcommand{\modulo}[2]{[#1]_{#2}}

\def\scriptf{{\mathcal F}}
\def\scriptg{{\mathcal G}}
\def\scriptm{{\mathcal M}}
\def\scriptb{{\mathcal B}}
\def\scriptc{{\mathcal C}}
\def\scriptt{{\mathcal T}}
\def\scripti{{\mathcal I}}
\def\scripte{{\mathcal E}}
\def\scriptv{{\mathcal V}}
\def\scriptw{{\mathcal W}}
\def\scriptu{{\mathcal U}}
\def\scriptS{{\mathcal S}}
\def\scripta{{\mathcal A}}
\def\scriptr{{\mathcal R}}
\def\scripto{{\mathcal O}}
\def\scripth{{\mathcal H}}
\def\scriptd{{\mathcal D}}
\def\scriptl{{\mathcal L}}
\def\scriptn{{\mathcal N}}
\def\scriptp{{\mathcal P}}
\def\scriptk{{\mathcal K}}
\def\frakv{{\mathfrak V}}
\def\R {\mathbb{R}}

\author{Loukas Grafakos}
\address{
	Loukas Grafakos
	\\
	Department of Mathematics
	\\
   University of Missouri
	\\
	Columbia MO 65211
	\\
USA
}
\email{grafakosl@missouri.edu}

\thanks {{\it{Mathematics Subject Classification.}} Primary 42B20; Secondary 42B25\\
\indent	{\it Key words and phrases}. Sparse domination, rough bilinear singular integrals,
$A_{\vec{p},\vec{r}}$ weights.\\
\indent The first author was supported by the Simons Foundation under award number 624733.  
The third named author was supported partly by NSFC (Nos. 11671039, 11871101) and NSFC-DFG (No. 11761131002).\\
\indent
Corresponding author: Qingying Xue}

\author{Zhidan Wang}
\address{
	Zhidan Wang
	\\
	School of Mathematical Sciences
	\\
	Beijing Normal University
	\\
	Laboratory of Mathematics and Complex Systems
	\\
	Ministry of Education
	\\
	Beijing 100875
	\\
	People's Republic of China
}
\email{zdwang@mail.bnu.edu.cn}
\author{Qingying Xue}
\address{Qingying Xue
	\\
	School of Mathematical Sciences
	\\
	Beijing Normal University
	\\
	Laboratory of Mathematics and Complex Systems
	\\
	Ministry of Education
	\\
	Beijing 100875
	\\
	People's Republic of China
}
\email{qyxue@bnu.edu.cn}


\date{\today}

\title
[ sparse domination ]
{sparse domination and weighted estimates for rough  bilinear  singular integrals  }

\begin{abstract}
Let $r>\frac{4}{3}$ and let $\Omega \in L^{r}(\mathbb{S}^{2n-1})$ have vanishing integral. We show that the   bilinear rough singular integral
$$T_{\Omega}(f,g)(x)=
\textrm{p.v.} \int_{\mathbb{R}^n}\int_{\mathbb{R}^n}\frac{\Omega((y,z)/|(y,z)|)}{|(y,z)|^{2n}}f(x-y)g(x-z)\,dydz,$$ satisfies a sparse bound by $(p,p,p)$-averages, where $p$ is bigger than a certain number explicitly related to $r$ and $n$. As a consequence we deduce
certain quantitative weighted estimates
for bilinear homogeneous singular integrals associated with rough homogeneous kernels.
\end{abstract}

\maketitle

\section{Introduction}
In   1952, Calder\'{o}n and Zygmund \cite{calderon1952} established the
 existence and $L^p(\mathbb{R}^n)$ boundedness   of the following rough singular integrals 
$$
T_K(f)(x_1,x_2,\dots,x_n)=\int_{\mathbb{R}^n} f (s_1, \dots , s_n)K(x_1-s_1,\dots,x_n-s_n)ds_1 \cdots ds_n,
$$ 
where $f$ is an integrable function defined on $\mathbb {R}^n$ and
$$
K(x_1,   \dots,  x_n) = \rho ^{-n}\Omega (\alpha _1,  \dots,\alpha _n) ,
$$
with $x_j=\rho \cos \alpha _j $ for all $j$, $\rho>0$, 
and $\alpha _1 ,\alpha _2,\dots,\alpha _n$ are the direction angles of $(x_1, x_2, \dots,  x_n)$.
Later on, using the method of rotations, Calder\'{o}n and Zygmund  \cite{10.2307/2372517}   proved that the operator  
$$
T_{\Omega}(f)(x)=\textrm{p.v.}\int_{\mathbb{R}^n}\frac{\Omega(y/|y|)}{|y|^n}f(x-y)dy
$$
is bounded on $L^p(\mathbb{R}^n)$ $(1<p<\infty)$ whenever $\Omega\in L^1{(\mathbb{S}^{n-1})}$,  $\int_{\mathbb{S}^{n-1}}\Omega \,d\sigma=0 $ and  if the even part of $\Omega$ belongs to the class $ L\log L(\mathbb{S}^{n-1})$.

\medskip
Since 1956 this area has flourished and has been enriched by activity that is too big to list here. We note however the work of Christ \cite{Ch88},  Christ and Rubio~de Francia \cite{CRub}, 
Seeger  \cite {Seeger1996}, Tao \cite {{10.2307/24901023}},
Duoandikoetxea and Rubio~de Francia \cite{DR}, Grafakos and Stefanov \cite {GS2} among 
many others. 
The weighted theory of $T_\Omega$ is also quite rich; here we note the   work 
of Duoandikoetxea \cite{Duo1993} and Vargas \cite{V}  and we would like to direct  attention   to the recent works of 
\cite{Ding2019,Li2019,Liu2019}.

In order to state more known results, we first introduce some notation.
A collection ${{\mathcal S}}$ of cubes in $\mathbb{R}^n$  is called $\eta$-sparse if for each $Q\in {{\mathcal S}}$ there is $E_Q\subset Q$ such that $|E_Q|\geq \eta |Q|$, and such that $E_Q\cap E_{Q'}=\varnothing$ when $Q\neq Q'$ (here $0<\eta<1$). For an    $\eta$-sparse collection of cubes ${{\mathcal S}}$  we  use the notation
$$
\mathsf{PSF}_{\mathcal S;p_1,p_2}(f_1,f_2) := \sum_{Q\in \mathcal S} |Q| \l f_1 \r_{p_1,Q}   \l f_2 \r_{p_2,Q}, \quad \l f \r_{p,Q}:=  |Q|^{-\frac1p}\left\|f\cic{1}_{Q}\right\|_{L^p}.
$$
 Such expressions dominate quantities $|\langle T(f_1), f_2\rangle|$ for linear operators $T$. 
 This type of domination is called  sparse   and plays an important role
and finds  wide  applicability  in harmonic analysis. 
For instance, it was used in the proof of $A_2$ conjecture \cite{La,Ler2013}. 
Earlier   works  related to sparse domination can be found in \cite{Barron2019,HRT, La, Ler, li2018,Volberg_2018} and the references therein. In 2017, Conde-Alonso et al. \cite{Conde-Alonso2017} obtained the following sparse domination for $T_{\Omega}$:
\begin{eqnarray*}|\langle T_{\Omega}(f_1),f_2\rangle| \leq \frac{Cp}{p-1} \sup_{{{\mathcal S}}}  \mathsf{PSF}_{{{\mathcal S}};1,p}(f_1,f_2)
	\begin{cases}
		\|\Omega\|_{L^{q,1}\log L(\mathbb S^{d-1})} , &1<q<\infty, p\geq q';\cr \|\Omega\|_{L^{\infty}(\mathbb  S^{d-1})}, &1<p<\infty .\end{cases}
\end{eqnarray*}
As a consequence, the authors in \cite{Conde-Alonso2017} deduced a  new sharp quantitative $A_p$-weighted estimate for $ T_{\Omega}$. Subsequently, for all $\epsilon >0$, Di Plinio, Hyt\"{o}nen, and Li \cite{plinio2017sparse},  provided a sparse bound by $(1+\epsilon,1+\epsilon)$-averages with linear growth in $\epsilon ^{-1}$ for the associated maximal truncated singular integrals $T_{*} $, i.e., 
$\|T_{*}\|_{(1+\epsilon,1+\epsilon),sparse} \leq {C}{\epsilon ^{-1}}.$
As  a corollary, certain novel quantitative weighted norm estimates were given for $T_{*}$.
\medskip

The study of bilinear singular integrals originated in the celebrated work of Coifman and Meyer \cite{Coifman1975}. The main object of study  is the bilinear operator (which is denoted as in the linear case without risk of confusion as its linear counterpart will not appear in the sequel)
\begin{align}\label{T}
T_{\Omega}(f,g)(x)=
\textrm{p.v.} \int_{\mathbb{R}^n}\int_{\mathbb{R}^n}\frac {\Omega((y,z)/|(y,z)|)}{|(y,z)|^{2n}}f(x-y)g(x-z)\,dydz,
\end{align}
where $\Omega$ is an integrable function on $\mathbb S^{2n-1}$ with mean value zero. 
Let $1<p_1,p_2<\infty$ and $\frac{1}{p}=\frac{1}{p_1}+\frac{1}{p_2}$. In 2015, Grafakos, He and Honz\'{\i}k \cite{Grafakos2018} obtained the $L^{p_1}({\mathbb{R}^n})\times L^{p_2}({\mathbb{R}^n})$ to $L^{p}({\mathbb{R}^n})$ boundedness for $T_{\Omega}$ when $\Omega \in L^{\infty}(\mathbb{S}^{2n-1})$. Additionally, these authors showed that $T_{\Omega}$ is bounded from $L^2 ({\mathbb{R}^n})\times L^2({\mathbb{R}^n})$  to $ L^1({\mathbb{R}^n})$ if $\Omega \in L^{q}(\mathbb{S}^{2n-1})$ for $q\geq 2$.  In 2018, Grafakos, He, and 
Slav\'ikov\'a \cite{Grafakos2019} gave a criterion  for $L^2({\mathbb{R}^n})\times L^2({\mathbb{R}^n})$ to $L^1({\mathbb{R}^n})$ boundedness for certain bilinear operators. As an application, these authors improved the results in \cite{Grafakos2018} as follows:
\begin{tha}\label{ta} (\cite{Grafakos2019})
	Let $q>4/3$ and  $ \Omega \in L^{q} (\mathbb{S}^{2n-1})$ with $\int_{\mathbb{S}^{2n-1}}\Omega\, 
	 d\sigma =0$. Then  
	$	\|T_{ \Omega}\|_{L^{p_1}(\mathbb{R}^n) \times L^{p_2}(\mathbb{R}^n) \rightarrow  L^p(\mathbb{R} ^n) }< {\infty}$
	whenever $2 \leq p_1, p_2 \leq  \infty $, $1\le p\le 2$,  and $\frac{1}{p} = \frac{1}{p_1} +\frac{1}{p_2}$.
\end{tha}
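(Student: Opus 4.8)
The plan is to reduce everything to the single endpoint estimate
\[
\|T_\Omega\|_{L^2(\R^n)\times L^2(\R^n)\to L^1(\R^n)}\ \lesssim_{n,q}\ \|\Omega\|_{L^q(\mathbb S^{2n-1})},
\]
valid, whenever $q>4/3$, for every $\Omega\in L^q(\mathbb S^{2n-1})$ of vanishing integral, and then to fill in the stated range by interpolation. The admissible pairs $(p_1,p_2)$ are exactly those for which $(1/p_1,1/p_2)$ lies in the closed triangle with vertices $(\tfrac12,\tfrac12)$, $(\tfrac12,0)$, $(0,\tfrac12)$, so by bilinear complex interpolation it suffices to bound $T_\Omega$ at these three corners, namely on $L^2\times L^2\to L^1$, on $L^2\times L^\infty\to L^2$, and on $L^\infty\times L^2\to L^2$. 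These three are equivalent by duality: one has $\|T_\Omega\|_{L^2\times L^2\to L^1}=\|T_\Omega^{*2}\|_{L^2\times L^\infty\to L^2}$, and the partial adjoint $T_\Omega^{*2}$ is again a bilinear homogeneous singular integral whose angular part is obtained from $\Omega$ by a fixed invertible linear change of variables on $\R^{2n}$, suitably restricted and renormalised on $\mathbb S^{2n-1}$; hence its $L^q$ norm is comparable to $\|\Omega\|_{L^q}$, and likewise for the other adjoint. Thus only the displayed endpoint bound, for an arbitrary admissible $\Omega$, has to be established.

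For the endpoint, fix a Littlewood--Paley resolution $1=\sum_{j\in\ZZ}\beta(2^{-j}t)$ of $(0,\infty)$ with $\beta$ supported in $[1,2]$, set $K_j(w)=|w|^{-2n}\,\Omega(w/|w|)\,\beta(2^{-j}|w|)$ for $w=(y,z)\in\R^{2n}$, let $T_j$ denote convolution against $K_j$, so that $T_\Omega=\sum_{j}T_j$, and write $m_j=\widehat{K_j}$ for the associated bilinear symbol on $\R^{2n}=\{\zeta=(\xi,\eta)\}$; by homogeneity $m_j(\zeta)=m_0(2^j\zeta)$. Three estimates on $m_j$ drive the argument. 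The vanishing integral of $\Omega$ together with the annular support of $K_j$ give the cancellation bound $|m_j(\zeta)|\lesssim\|\Omega\|_{L^1}\min\{1,\,2^j|\zeta|\}$. Since $\partial^\alpha m_j$ is the Fourier transform of $(-2\pi i w)^\alpha K_j$ and $|w^\alpha K_j|\lesssim 2^{j|\alpha|}|K_j|$, one has the derivative bounds $|\partial^\alpha m_j(\zeta)|\lesssim 2^{j|\alpha|}\|\Omega\|_{L^1}$. And writing $m_0(\zeta)=\int_{\mathbb S^{2n-1}}\Omega(\theta)\,\widehat\phi(\theta\cdot\zeta)\,d\theta$ for a fixed bump $\phi$ supported away from the origin, so that $\widehat\phi$ is Schwartz, and applying H\"older's inequality on the sphere together with the elementary bound $\int_{\mathbb S^{2n-1}}(1+|\theta\cdot\zeta|)^{-N}\,d\theta\lesssim_N(1+|\zeta|)^{-1}$, one obtains the Grafakos--Stefanov-type decay
\[
|\partial^\alpha m_j(\zeta)|\ \lesssim\ 2^{j|\alpha|}\,\|\Omega\|_{L^q}\,(2^j|\zeta|)^{-1/q'}\qquad\text{when }2^j|\zeta|\ge 1.
\]
Combining the first and third of these gives $\sum_{j}|m_j(\zeta)|\lesssim_q\|\Omega\|_{L^q}$ uniformly in $\zeta$; since a bilinear multiplier with bounded symbol maps $L^4\times L^4$ to $L^2$ (Plancherel in the output frequency, then Young's and Hausdorff--Young's inequalities), this already produces, for free and for every $q>1$, the auxiliary bound $T_\Omega\colon L^4\times L^4\to L^2$ with norm $\lesssim_q\|\Omega\|_{L^q}$.

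The endpoint $L^2\times L^2\to L^1$ cannot be summed term by term in $j$, since the $T_j$ have comparable $L^2\times L^2\to L^1$ norms, so quasi-orthogonality is unavoidable; this is where the hypothesis $q>4/3$ is consumed. I would feed the family $\{T_j\}$, equipped with the estimates above, into the $L^2\times L^2\to L^1$ criterion of \cite{Grafakos2019}: one further Littlewood--Paley-decomposes each $T_j$ in the two input frequencies and in the output frequency, notes that the main-scale contributions, where the input frequency is $\sim 2^{-j}$, assemble by the cancellation and derivative bounds into a Coifman--Meyer-type bilinear operator summed by bilinear Littlewood--Paley theory, and controls the off-scale contributions using the decay factors $\min\{2^j|\zeta|,\,(2^j|\zeta|)^{-1/q'}\}$ above, an $L^2$ almost-orthogonality (Cotlar--Stein-type) argument, the crude single-annulus bound $\|T_j\|_{L^2\times L^2\to L^1}\lesssim\|\Omega\|_{L^q}$, and the $L^4\times L^4\to L^2$ bound. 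The resulting summation converges precisely when the decay exponent $1/q'$ outweighs the combinatorial loss incurred in the bilinear almost-orthogonality, that is, when $1/q'>1/4$, which is exactly $q>4/3$; the threshold in \cite{Grafakos2018} was $q\ge 2$, i.e.\ decay exponent $1/q'\ge 1/2$, which the present, more efficient, orthogonality argument halves. The step I expect to be the genuine obstacle is precisely this balance: proving the Grafakos--Stefanov decay with the sharp exponent $1/q'$ and verifying that it meets the summability threshold of the criterion all the way down to $q=4/3$. Once the endpoint is in hand, duality supplies the remaining two corners and bilinear complex interpolation completes the proof.
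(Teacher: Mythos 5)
First, a point of orientation: this paper does not prove Theorem A at all; it quotes it from \cite{Grafakos2019}, and the machinery of that proof is what reappears, in quantitative form, in Lemma~\ref{Lemma-last} and Lemma~\ref{p2}. Your skeleton --- reduce by duality and bilinear interpolation to the three corners $L^2\times L^2\to L^1$, $L^2\times L^\infty\to L^2$, $L^\infty\times L^2\to L^2$, transfer the corner bound to the adjoints through a fixed invertible linear change of frequency variables, decompose the kernel dyadically, and exploit the Fourier decay $\min\{2^j|\zeta|,(2^j|\zeta|)^{-1/q'}\}$ with the threshold $1/q'>1/4$, i.e.\ $q>4/3$ --- is indeed the architecture of that proof. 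But your argument has a genuine hole exactly where you yourself suspect it: the summation over scales at the endpoint. There is no Cotlar--Stein lemma for sums of bilinear operators mapping into $L^1$ (the target is not a Hilbert space and there is no Plancherel in the output), and sup-norm decay of the symbols alone is insufficient. What actually closes the gap in \cite{Grafakos2019} is the wavelet-counting criterion applied not to your single spatial annuli $T_j$ but to the double-indexed pieces $K_j=\sum_i\Delta_{j-i}(\beta_i K)$ as in \eqref{92}--\eqref{93}: Hausdorff--Young places $\widehat{\beta K}$ in $L^{q'}$ with $q'<4$, and interpolating the wavelet count (governed by the $L^{q'}$ norm) against the sup-norm decay $C_0\sim 2^{-\delta j}$, $\delta<1/q'$, yields $\|T_{K_j}\|_{L^2\times L^2\to L^1}\lesssim j\,2^{-\delta j(1-q'/4)}\|\Omega\|_{L^q}$, summable precisely because $q'<4$. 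You assert this balance by analogy but never establish it; it is the actual content of the theorem, not a verifiable detail.

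Second, one of your auxiliary steps is false: a bilinear multiplier with merely bounded symbol does not map $L^4\times L^4\to L^2$. Your justification applies Hausdorff--Young in the forbidden direction: after Plancherel and Young you would need $\|\widehat f\|_{L^{4/3}}\lesssim\|f\|_{L^4}$, which fails, since Hausdorff--Young controls $\|\widehat f\|_{L^{p'}}$ by $\|f\|_{L^p}$ only for $p\le 2$; if boundedness of a bilinear symbol implied any Lebesgue bounds, much of this subject would trivialize. The bound $T_\Omega\colon L^4\times L^4\to L^2$ is true, but it is an instance of Theorem A itself (the centre of the triangle), not a free input. Finally, in the duality step you should argue at the level of symbols rather than of $\Omega$: the adjoint kernel's angular part need not inherit the vanishing integral, but the size, smoothness and decay hypotheses (b)--(c) of Lemma~\ref{Lemma-last} are invariant under the invertible map $A^t$, which is exactly how Lemma~\ref{p2} treats $(m_j)^{*1}$ and $(m_j)^{*2}$; your remark that the adjoint's $L^q$ norm is comparable is not by itself enough to rerun the endpoint argument for the adjoints.
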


For   $\Omega$ in $L^q(\mathbb{S}^{2n-1})$, it is natural to ask  for the exact 
range of $(p_1,p_2,p)$ such that $T_{ \Omega}$ maps $L^{p_1}(\mathbb{R}^n) \times L^{p_2}(\mathbb{R}^n)$ to $ L^{p}(\mathbb{R} ^n) $. This problem is quite delicate. 
A counterexample of Grafakos, He and Slav\'{\i}kov\'{a} \cite{Grafakos2019a} shows that there exists an $\Omega $ in $L^{q} (\mathbb{S}^{2n-1})$, $1\leq q<\infty$, which satisfies the H\"{o}rmander kernel condition on $\mathbb R^{2n}$, such that the associated $T_\Omega$ is unbounded from 
$L^{p_1}(\mathbb{R}^n) \times L^{p_2}(\mathbb{R}^n)$ to $ L^{p }(\mathbb{R} ^n) $ when 
$\frac{1}{p} = \frac{1}{p_1} +\frac{1}{p_2}$, $1 \leq p_1, p_2 \leq  \infty $ and $\frac{1}{p}+\frac{2n-1}{q}>2n$. However, it is   unknown whether $T_\Omega$ is bounded  when the last condition fails. 
\smallskip

In this work, we focus on the sparse domination of  $T_{ \Omega}$ for rough functions $\Omega$. 
Note that the authors in \cite{Culiuc2018} established a uniform domination of the family of trilinear multiplier forms with singularity over an one-dimensional subspace. Later Barron \cite{Barron2017} considered the sparse domination for rough bilinear singular integrals with $\Omega$ in $L^\infty(\mathbb S^{2n-1})$.
\begin{thb}\label{tb} (\cite{Barron2017})
	Suppose $T_{\Omega}$ is the rough bilinear singular integral operator  defined by \eqref{T}, with $\Omega \in L^{\infty}(\mathbb{S}^{2n-1})$ and $\int_{\mathbb{S}^{2n-1}}\Omega \, d\sigma=0$. Then for any $1<
	p<\infty$, there is a constant $C_{p,n} >0$ so that
	$$|\langle T_{\Omega}(f_1, f_2), f_3 \rangle| \leq C_{p,n}\|\Omega\|_{L^{\infty}(\mathbb S^{2n-1})} \sup _{{\mathcal S}}\mathsf{PSF}_{{\mathcal S}}^{(p,p,p)}(f_1, f_2, f_3),$$ where the sparse $(p_1,p_2,p_3)$-averaging form is defined as
	$$
	\mathsf{PSF}^{(p_1,p_2,p_3)}_{{\mathcal S}} (f_1,f_2,f_3):=\sum_{Q\in {{{\mathcal S}}}}|Q|\prod\limits_{i=1}^3\langle f_i \rangle _{p_i,Q}, \hbox{\ for \ }
1\le p_i<\infty,  \ i=1,2,3.	$$
\end{thb}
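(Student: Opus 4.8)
By a routine limiting argument it suffices to estimate $\langle |T_{\Omega}(f_1,f_2)|,f_3\rangle$ for nonnegative, bounded, compactly supported $f_i$, and, after restricting to one large dyadic cube and translating, to exhibit a sparse family adapted to a single dyadic lattice. Fix a smooth radial partition of unity $\sum_{j\in\mathbb Z}\psi(2^{-j}(y,z))=1$ on $\mathbb R^{2n}\setminus\{0\}$ with $\psi$ supported in $\{\tfrac12\le|(y,z)|\le 2\}$, set $K_j(y,z)=\dfrac{\Omega((y,z)/|(y,z)|)}{|(y,z)|^{2n}}\,\psi(2^{-j}(y,z))$, and let $T_j$ be the bilinear convolution operator with kernel $K_j$, so that $T_{\Omega}=\sum_{j\in\mathbb Z}T_j$, the cancellation $\int_{\mathbb S^{2n-1}}\Omega=0$ being used to make the sum over $j\le 0$ converge.

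\textbf{Step 2: the two quantitative inputs.} The first is a \emph{single--scale} estimate: since $\|K_j\|_{L^\infty}\lesssim 2^{-2nj}\|\Omega\|_{L^\infty}$ and $K_j$ lives where $|(y,z)|\sim 2^j$, a direct computation gives, for any cube $Q$ with $\ell(Q)\sim 2^j$,
$$\|T_j(f_1\mathbf 1_{Q},f_2\mathbf 1_{Q})\|_{L^\infty(Q)}\lesssim \|\Omega\|_{L^\infty}\,\langle f_1\rangle_{1,Q}\,\langle f_2\rangle_{1,Q},$$
uniformly in $j$, hence a uniform single-scale $L^{p}\times L^{p}\to L^{p/2}$ bound for every $p\in(1,\infty)$. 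The second is a \emph{smoothness--with--decay} decomposition: following the wavelet/Littlewood--Paley analysis of \cite{Grafakos2018} (see also \cite{Grafakos2019}), resolve $K_j=\sum_{k\ge1}K_j^{k}$ by a dyadic frequency decomposition in the variable dual to $(y,z)$, with companion operators $T_j^{k}$, so that $K_j^{k}$ is essentially $K_j$ localized to frequencies of size $2^{k}/2^{j}$. The crucial fact is that $U_k:=\sum_{j\in\mathbb Z}T_j^{k}$ obeys the global bound
$$\|U_k\|_{L^2\times L^2\to L^1}\lesssim \|\Omega\|_{L^\infty}\,2^{-\delta k}$$
for some $\delta=\delta(n)>0$ --- this is the quantitative form of Theorem~A and of the $L^2\times L^2\to L^1$ boundedness in \cite{Grafakos2018} --- while each $T_j^{k}$ satisfies the single-scale bound above and a H\"ormander-type kernel regularity estimate, both degrading at most polynomially, say like $2^{Nk}$, in $k$.

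\textbf{Step 3: sparse bound one frequency class at a time, then summation.} Fix $p\in(1,2)$. For each $k$, the operator $U_k$ is a bilinear operator with quantitative size, regularity, and $L^2\times L^2\to L^1$ data, so the recursive (stopping-time) construction underlying the bilinear form of Lerner's sparse domination principle --- at each selected cube, peel off the local part controlled by the single-scale $L^{p}$-bound and push the remainder to the next generation using the global $L^2$-bound of $U_k$ together with the kernel regularity of its pieces --- yields a sparse bound
$$|\langle U_k(f_1,f_2),f_3\rangle|\ \lesssim\ A_k\,\sup_{\mathcal S}\mathsf{PSF}^{(p,p,p)}_{\mathcal S}(f_1,f_2,f_3),$$
where $A_k$ is obtained by interpolating (a Marcinkiewicz-type interpolation built into the construction) the polynomially growing single-scale $L^{p}$-bounds against the exponentially decaying global $L^2\times L^2\to L^1$ bound; since the genuine gain $2^{-\delta k}$ dominates the polynomial loss $2^{Nk}$ after such an interpolation, one gets $A_k\lesssim_{p}2^{-\delta_p k}\|\Omega\|_{L^\infty}$ with $\delta_p>0$. (The $\Omega\in L^\infty$ hypothesis provides the integrability margin needed to make this work for every $p>1$; for $p$ near $1$ one additionally invokes the $L^{p_1}\times L^{p_2}\to L^p$ bounds of \cite{Grafakos2018}.) Summing over $k$, using $\sum_k A_k<\infty$ and $\mathsf{PSF}^{(p,p,p)}_{\mathcal S}\le\sup_{\mathcal S}\mathsf{PSF}^{(p,p,p)}_{\mathcal S}$ termwise, gives the claimed bound for $1<p<2$; the case $p\ge2$ follows from $\langle f\rangle_{p,Q}\le\langle f\rangle_{p',Q}$ for $p\le p'$, which only enlarges the right-hand side.

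\textbf{Main obstacle.} The heart of the argument is the decay estimate $\|U_k\|_{L^2\times L^2\to L^1}\lesssim 2^{-\delta k}\|\Omega\|_{L^\infty}$, summed over \emph{all} dyadic scales $j$ simultaneously. In the linear theory this is classical and follows from the decay of $\widehat{\Omega\,d\sigma}$; in the bilinear setting it is precisely where the wavelet decomposition and bilinear almost-orthogonality machinery of \cite{Grafakos2018} are indispensable, and the competing geometry of the two frequency variables on $\mathbb S^{2n-1}$ makes both the proof and the bookkeeping of the rate $\delta$ considerably more delicate. A secondary, milder point is that $T_{\Omega}$ with $\Omega\in L^\infty(\mathbb S^{2n-1})$ is not known to satisfy an $L^1\times L^1$ endpoint estimate, which is why the sparse form must use $(p,p,p)$-averages with $p>1$ rather than $p=1$.
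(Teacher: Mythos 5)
Your outline reconstructs the same architecture that underlies Theorem B (which this paper quotes from \cite{Barron2017}) and that the paper itself redeploys to prove the more general Theorem \ref{thm1}: decompose the kernel into scales and then into relative-frequency pieces, use a uniform single-scale size bound plus the $L^2\times L^2\to L^1$ decay $2^{-\delta k}$ of \cite{Grafakos2018}, run a stopping-time sparse principle (the paper's Lemma \ref{thm3}) on each frequency piece with localized $\mathcal X_p/\mathcal Y_p$ estimates and an interpolation in that scale (the paper's Lemma \ref{4}), and sum in $k$. So the route is the right one, but there is a genuine gap at the decisive point of Step 3. If the single-scale/kernel-regularity input degrades like $2^{Nk}$ with a \emph{fixed} $N$, then interpolating it against the gain $2^{-\delta k}$ produces a net exponent of the form $N(1-\theta)-\delta\theta$, where the weight $\theta\in(0,1)$ is dictated by $p$; this is negative only when $\theta>N/(N+\delta)$, i.e.\ only for $p$ in a restricted range, and for $p$ near $1$ (where $\theta\to0$) the sum over $k$ diverges. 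Thus the assertion that ``the genuine gain $2^{-\delta k}$ dominates the polynomial loss $2^{Nk}$ after such an interpolation'' is false as stated, and this is exactly the step that determines the admissible range of $p$: in the finite-$r$ case the unavoidable loss $2^{k(\gamma+2n/r)}$ of Lemma \ref{lk} is precisely what forces the restriction on $p$ in Theorem \ref{thm1}. The correct mechanism in the $L^\infty$ case is that the loss rate is \emph{tunable}: each frequency piece is a bilinear Calder\'on--Zygmund kernel with constant $\lesssim \|\Omega\|_{L^\infty}2^{\gamma k}$ for every $\gamma\in(0,1)$ (interpolate size against smoothness at the kernel level, as in Lemma \ref{lk} with $q=\infty$), and for each fixed $p>1$ one chooses $\gamma=\gamma(p,\delta)$ small before summing in $k$. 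Your parenthetical fallback for $p$ near $1$ --- invoking the global $L^{p_1}\times L^{p_2}\to L^p$ bounds of \cite{Grafakos2018} --- does not repair this, since global Lebesgue bounds do not by themselves feed the stopping-time construction, which requires the localized estimates with $k$-summable constants.

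A secondary omission: your frequency decomposition runs only over $k\ge1$, and the remark in Step 1 that the cancellation of $\Omega$ makes ``the sum over $j\le0$ converge'' conflates the scale and frequency indices. The low-frequency pieces ($k\le0$) are where the vanishing integral of $\Omega$ is actually spent: they are Calder\'on--Zygmund operators with norms $O(2^{-(1-\delta)|k|})$ (cf.\ Proposition \ref{p1}), and they need their own (easy but nonvacuous) sparse bound before the summation; without treating them, $\sum_j T_j$ has not been controlled.
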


\medskip

In this paper, we  establish sparse domination for bilinear rough operator $T_{\Omega}$ with $\Omega \in L^{r}(\mathbb S^{2n-1})$ for $r<\infty$. These  $\Omega$ produce rougher singular integrals than the ones  previously studied. As a result we deduce certain quantitative weighted estimates for rough bilinear singular integral operators.  
The main result  of this paper is as follows:

\begin{theorem}\label{thm1} Let $\Omega \in L^{r}(\mathbb{S}^{2n-1})$, $r>4/3$, and $\int _{\mathbb{S}^{2n-1}}\Omega=0$. Let
	$T_{\Omega}$ be the rough bilinear singular integral operator defined in \eqref{T}. Then for  $p>\max \{\frac{24n+3r-4}{8n+3r-4} , \frac{24n+r}{8n+r} \}$ 
 there exists a constant $C=C_{p,n,r}$ such that 
$$
|\langle T_{\Omega}(f_1,f_2),f_3 \rangle|\leq C\|\Omega\|_{L^{r}(\mathbb{S}^{2n-1})}\sup _{{\mathcal S}} \mathsf{PSF}_{{\mathcal S}}^{(p,p,p)}(f_1,f_2,f_3).
$$
\end{theorem}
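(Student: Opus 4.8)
The plan is to reduce Theorem~\ref{thm1} to a family of quantitative single-scale estimates for $T_\Omega$ and then feed those into the recursive stopping-time scheme that turns such estimates into a sparse bound; this is the mechanism used for rough linear singular integrals by Conde-Alonso, Culiuc, Di~Plinio and Ou, adapted to the bilinear setting in the proof of Theorem~B. Fix a smooth radial $\varphi$ supported in $\{\tfrac12\le|u|\le 2\}\subset\reals^{2n}$ with $\sum_{j\in\integers}\varphi(2^{-j}u)=1$ for $u\neq 0$, set $K(u)=\Omega(u/|u|)|u|^{-2n}$ and $K_j=K\,\varphi(2^{-j}\cdot)$, and let $T_j$ be the bilinear convolution operator with kernel $K_j$, so that $T_\Omega=\sum_{j\in\integers}T_j$. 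Integrating over the annulus $|u|\sim 2^j$ gives the scale-invariant bound $\|K_j\|_{L^1(\reals^{2n})}\lesssim_n\|\Omega\|_{L^1(\mathbb S^{2n-1})}\lesssim_{n,r}\|\Omega\|_{L^r(\mathbb S^{2n-1})}$, and hence, by Minkowski's inequality together with multilinear interpolation, $\|T_j\|_{L^{q_1}(\reals^n)\times L^{q_2}(\reals^n)\to L^{q}(\reals^n)}\lesssim_{n,r}\|\Omega\|_{L^r(\mathbb S^{2n-1})}$ uniformly in $j\in\integers$ for every Hölder triple with $1\le q_1,q_2\le\infty$, $\tfrac1q=\tfrac1{q_1}+\tfrac1{q_2}\le 1$. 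These are the ``trivial'' endpoints to be interpolated against; on their own they carry no decay in $j$ and do not sum.

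\textbf{Quantitative single-scale estimates.}
The heart of the matter is to prove that, after rescaling a cube $Q$ with $\ell(Q)=2^i$ to the unit cube, the local piece $\sum_{j\le i}T_j$ tested on functions supported in $3Q$ satisfies a $(p,p,p)$-averaged bound whose scale-by-scale pieces decay geometrically, i.e.
\[
\bigl|\bigl\langle T_{i-s}(f_1\one_{3Q},f_2\one_{3Q}),\,f_3\one_{3Q}\bigr\rangle\bigr|\ \lesssim_{n,r,p}\ 2^{-\delta_p s}\,\|\Omega\|_{L^r(\mathbb S^{2n-1})}\,|Q|\prod_{k=1}^{3}\l f_k\r_{p,3Q},\qquad s\ge 0,
\]
with $\delta_p>0$ throughout the asserted range of $p$. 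When testing against $T_j$ one decomposes each $f_k$ into its frequency-$\lesssim 2^{i-j}$ part and its frequency-$\gtrsim 2^{i-j}$ part. For the former one uses the cancellation $\int_{\mathbb S^{2n-1}}\Omega=0$: a mean-zero kernel supported at scale $2^{i-s}$ integrated against essentially constant functions gains a factor of order $2^{-s}$ (Taylor expansion of the exponential), and after interpolation against the trivial bound above this contributes the constraint coming from $\tfrac{24n+r}{8n+r}$. For the genuinely rough part one invokes the analysis behind Theorem~A: the size and smoothness estimates for $\widehat{K_j}$ on dyadic frequency shells produced by the method of rotations of Duoandikoetxea and Rubio~de~Francia, combined with the $L^2\times L^2\to L^1$ boundedness criterion of Grafakos, He and Slav\'{\i}kov\'{a}, yield an $L^2\times L^2\to L^1$ bound for the frequency-gap-$2^{m}$ piece of $\sum_j T_j$ with a gain $2^{-\delta(n,r)m}$, where $\delta(n,r)>0$ is a positive exponent that degenerates as $r\downarrow\tfrac43$; this is the only point at which $r>\tfrac43$ is used. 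Summing in $j$ using the almost orthogonality of the frequency cut-offs and interpolating the resulting decaying $L^2$ estimate against the trivial single-scale bounds gives the required decay at exponent $p$, summable exactly when $p>\tfrac{24n+3r-4}{8n+3r-4}$; the maximum of the two resulting thresholds is the hypothesis on $p$.

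\textbf{From single scale to sparse.}
Granting these estimates, fix a cube $Q_0$ containing the supports of $f_1,f_2,f_3$. Select the maximal dyadic subcubes $Q\subsetneq Q_0$ for which some $\l f_k\r_{p,3Q}$ exceeds a fixed large multiple of its value over $3Q_0$, or for which a grand maximal truncation of $\sum_{j\le N}T_j(f_1\one_{\reals^n\setminus 3Q},f_2\one_{\reals^n\setminus 3Q})$ is large on a positive-measure subset of $Q$; using the single-scale estimates (and, when $p>2$, Theorem~A interpolated with the trivial $L^1\times L^1\to L^{1/2,\infty}$ bound) one arranges that these cubes have total measure at most $\tfrac12|Q_0|$, so that $E_{Q_0}:=3Q_0\setminus\bigcup_Q 3Q$ satisfies $|E_{Q_0}|\gtrsim|Q_0|$. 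Splitting $T_\Omega$ at the stopping scale, the portion at scales above the stopping cubes is bounded by $C\|\Omega\|_{L^r(\mathbb S^{2n-1})}|Q_0|\prod_k\l f_k\r_{p,3Q_0}$ via the displayed estimate (the geometric decay absorbing the sum over scales), while the portion at finer scales is the sum of the corresponding quantities over the stopping cubes; iterating inside each stopping cube and collecting all cubes produced yields a sparse family $\mathcal S$ with
\[
|\langle T_\Omega(f_1,f_2),f_3\rangle|\ \lesssim\ \|\Omega\|_{L^r(\mathbb S^{2n-1})}\,\mathsf{PSF}_{\mathcal S}^{(p,p,p)}(f_1,f_2,f_3),
\]
and a standard exhaustion over an increasing sequence of cubes $Q_0$ completes the proof.

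\textbf{The main obstacle.}
The delicate step is the second one: one must extract from the proof of Theorem~A an effective $L^2\times L^2\to L^1$ estimate with an explicit geometric gain $2^{-\delta(n,r)m}$ in the frequency gap, pin down the best admissible exponent $\delta(n,r)$, and then track that rate faithfully through the almost-orthogonal summation over scales and the interpolations against the trivial single-scale bounds, so that the exponent at which the scale-by-scale estimates become summable is precisely $\max\bigl\{\tfrac{24n+3r-4}{8n+3r-4},\tfrac{24n+r}{8n+r}\bigr\}$. The low-frequency cancellation piece and the stopping-time bookkeeping are comparatively routine, as is verifying that the grand maximal truncations inherit the required bounds by a Cotlar-type comparison with the full operator together with the uniform single-scale bounds.
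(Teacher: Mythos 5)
Your overall skeleton --- decompose the kernel, extract decay via the $L^2\times L^2\to L^1$ machinery of Grafakos--He--Slav\'ikov\'a, interpolate against trivial Calder\'on--Zygmund-type bounds, and feed localized estimates into a stopping-time recursion --- is the same skeleton the paper uses (there via Barron's abstract Lemma~\ref{thm3}, the Littlewood--Paley pieces $K_j=\sum_i\Delta_{j-i}(\beta_iK)$ of Section~\ref{sec3}, Lemma~\ref{lk}, Proposition~\ref{p1}, Lemma~\ref{p2}, and the $\dot{\mathcal X}_q/\mathcal Y_q$ interpolation Lemma~\ref{4}). However, the estimate you yourself designate as the heart of the matter is false as stated, and this is a genuine gap. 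You claim, for $\ell(Q)=2^i$ and all $s\ge 0$, that $|\langle T_{i-s}(f_1\mathbf 1_{3Q},f_2\mathbf 1_{3Q}),f_3\mathbf 1_{3Q}\rangle|\lesssim 2^{-\delta_p s}\|\Omega\|_{L^r}|Q|\prod_{k}\langle f_k\rangle_{p,3Q}$. For $p<3$ (which is part of the asserted range, since both thresholds lie strictly between $1$ and $3$) this already fails by scaling: fix $f_1,f_2,f_3$ and a scale $j$ with $\langle T_j(f_1,f_2),f_3\rangle\neq 0$ and let $Q$ grow; the left side is a fixed nonzero number while the right side is comparable to $2^{-\delta_p s}|Q|^{1-3/p}\prod_k\|f_k\|_{L^p}\to 0$. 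Moreover, even for a fixed cube there is no decay in the spatial-scale gap $s$: testing a single annular piece against functions oscillating at frequency comparable to $2^{-(i-s)}$ yields pairings of size $|Q|\prod_k\langle f_k\rangle_{p,3Q}$ with no gain. The decay one can actually prove lives in the gap between the frequency localization and the spatial scale of the kernel --- the index $j$ of the pieces $K_j$, each of which aggregates \emph{all} spatial scales --- not in the gap between the kernel scale and the stopping cube. Your later remarks about frequency-gap pieces and almost orthogonality point in the right direction, but you never explain how that frequency-gap decay is converted into localized estimates usable inside the recursion; that conversion is precisely what the paper does by proving, for each fixed $j$, a Calder\'on--Zygmund-type stopping-form bound at exponent $1$ with constant $2^{j(\gamma+2n/r)}$ (Lemma~\ref{lk} plus Barron's Proposition 3.3), a decaying bound at exponent $3$ coming from $\|T_{K_j}\|_{L^{p_1}\times L^{p_2}\to L^p}\lesssim j2^{-cj}\|\Omega\|_{L^r}$ (Proposition~\ref{p1} for $j\le 0$, Lemma~\ref{p2} for $j>0$, including the two adjoint multipliers so that the cancellative function may sit in any slot), and then interpolating these in the $\dot{\mathcal X}_q/\mathcal Y_q$ scale (Lemma~\ref{4}) before summing in $j$.

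Two further points. The precise thresholds are not derived --- you defer this as the main obstacle --- and your attribution of $\frac{24n+r}{8n+r}$ to the low-frequency cancellation and $\frac{24n+3r-4}{8n+3r-4}$ to the rough part is not how they arise: in the actual computation both come from one and the same interpolation, with exponent $q=1+2\epsilon$, $\epsilon=\frac{2n/r+\gamma}{2n/r+c}$, where $c<\frac{1}{r'}(1-\frac{r'}{4})$ when $\frac43<r\le 2$ and $c<\frac14$ when $r>2$; the two formulas correspond to these two ranges of $r$, not to two different pieces of the operator. Finally, your stopping-time step invokes a ``trivial $L^1\times L^1\to L^{1/2,\infty}$ bound'' for (maximal truncations of) $T_\Omega$; no such endpoint is available for rough $\Omega\in L^r$ with $r<\infty$, since the kernel is not Calder\'on--Zygmund, and the paper instead verifies the needed a priori hypothesis $C_T(r_1,r_2,\alpha)<\infty$ through Theorem~A together with Remark~\ref{Rnew} and handles the residual truncation terms as in Barron's Section 6.2.
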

\begin{remark} Letting $r\to\infty$,  the restriction on $p$ in Theorem~\ref{thm1} becomes
$p>1$ for $\Omega\in L^\infty(\mathbb{S}^{2n-1})$. Thus Theorem~\ref{thm1} coincides with the sparse domination result of Theorem B when $r=\infty$.  Thus our work essentially extends that of \cite{Barron2017} and all the weighted results it implies. 
\end{remark}
In order to state our corollaries, we  recall some background and  introduce 
notation relevant  to certain classes of weights.
Let $p'=p/(p-1)$ be the dual exponent of $p$. We recall the definition of the $A_p$ weight classes: We say $w\in A_p$ for $1<p<\infty$ if $w>0$, $w\in L_{loc}^1$ and $$[w]_{A_p}:=\sup\limits_{Q}\Big(\frac{1}{|Q|}\int_{Q}w\Big)\,\Big(\frac{1}{|Q|}\int_{Q}{w}^{- \frac{1}{p-1}}\Big)^{p-1}<\infty.$$

In 2002  Grafakos and Torres \cite{{GraTor2}} initiated the weighted theory for the multilinear singular operators but it was not until   2009 that  Lerner et. al. \cite{Lerner2009}  introduced the canonical Muckenhoupt weight class $A_{\vec{p}}$, which provides a natural 
analogue of the linear theory.
\medskip

\begin{definition}[{\bf Multiple weight class $A_{\vec{p}}$}, \cite{Lerner2009}]Let $1\leq p_1,\dots,p_m<\infty,\ \vec{w}=(w_1,\dots,w_m),$\ where $w_i\ (i=1,\dots,m)$ are nonnegative functions defined on $\mathbb R^n$,\ and denote $v_{\vec{w}}=\mathop\prod\limits_{j=1}^mw_j^{{p}/{p_j}}.$\ We say $\vec{w}\in A_{\vec{p}}$  if
$$
[{\vec{w}}]_{A_{\vec{p}}}=\sup_Q\left(\frac{1}{|Q|}\int_Qv_{\vec{w}}(t)dt\right)^{\frac{1}{p}}\prod_{i=1}^m\left(\frac{1}{|Q|}\int_Q w_i^{1-p_i'}(t)dt\right)^{\frac{1}{p_i'}}<\infty,
$$
where the supremum is taken over all cubes $Q\subset\mathbb R^n$,\
and the term $\left(\frac{1}{|Q|}\int_Qw_i^{1-p_i'}(t)dt\right)^{\frac{1}{p_i'}}$ is understood as $(\inf_Qw_i)^{-1}  $ when $p_i=1$.
\end{definition}
More general weights class than $A_{\vec{p}}$ has also been considered by Li, Martell, and Ombrosi in \cite{LMO2018}. 
For $m\geq 2$,
given $\vec{p}=(p_1,\dots,p_m)$ with $1\leq p_1,\dots,p_m< \infty$ and $\vec{r}=(r_1,\dots,r_{m+1})$ with $1\leq r_1,\dots,r_{m+1 }<\infty$, we say that $\vec{r} \prec \vec{p}$
whenever 
$$
\textup{$r_i < p_i$, $i=1,\dots,m$ and $r'_{m+1}>p$, where $\frac{1}{p}:=\frac{1}{p_1}+\dots+\frac{1}{p_m}$.}
$$

\begin{definition}[{\bf $A_{\vec{p},\vec{r}}$ weight class}, \cite{LMO2018}] \label {def1}Let $m\geq 2$ be an integer, $\vec{p}=(p_1,\dots , p_m)$
	with $1\leq p_1,\dots,p_m <\infty$ and $\vec{r}=(r_1,\,\dots,\,r_{m+1})$ with $1\leq r_1,\dots,r_{m+1} <\infty$.
	$1/p=\sum_{k=1}^{m}1/p_k$. For each $w_k>0$, $w_k\in L_{loc}^1$,
	set
	$$
	{{w}}=\prod_{k=1}^{m}w_k^{p/p_k} .
	$$
	 We say that $\vec{w}=(w_1,...,w_m)\in
	A_{\vec{p},\vec{r}}$ if $0<w_i<\infty$, $1 \le i\le m$ and $[w]_{A_{\vec{p},\vec{r}}}<\infty$
	with
	$$
	[\vec w]_{A_{\vec{p},\vec{r}}}=\sup_{Q}\Big(\frac{1}{|Q|}\int_Q {{w(x)}^{\frac{r'_{m+1}}{r'_{m+1}-p}}}\,{\rm d}x\Big)^{1/p-1/r'_{m+1}}
	\prod_{k=1}^m\Big(\frac{1}{|Q|}\int_Qw_k(x)^{-\frac{1}{\frac{p_k}{r_k}-1}}
	\,{\rm d}x\Big)^{1/r_k-1/p_k}.
	$$
	
	When $r_{m+1}=1$ the term corresponding to $w$ needs to be replaced by $(\frac{1}{|Q|}\int_Q w dx)^{\frac{1}{p}}$.
Here and afterwards,  the expression
$$
\Big(\frac{1}{|Q|}\int_Q w_k(x)^{-\frac{1}{\frac{p_k}{r_k}-1}}\,{\rm d}x\Big)^{1/r_k-1/p_k}
$$  
 is understood as ess$\sup_{ Q}w_k^{-1/p_k}  $ when $p_k=r_k$.
 
		 When $r_1=\dots=r_m=1$, $A_{\vec{p},\vec{r}}$ coincides with
 the weight class $A_{\vec{p}}$ introduced by
	Lerner et al. \cite{Lerner2009}
\end{definition}
As an application of the sparse domination, we obtain some weighted estimates for $T_{\Omega}$. {The first result concerns with the multiple weights and the other one is associated with one weight case.}
\begin{corollary}\label{collary2}
Let $\Omega \in L^{r}(\mathbb{S}^{2n-1})$ with $r>4/3$ and  $\int _{\mathbb{S}^{2n-1}}\Omega \, d\sigma=0$.  Let $\vec{q}=(q_1,q_2)$, $\vec{p}=(p_1,p_2,p_3)$ with $\vec{p}\prec \vec{q}$ and $p_i>\max\{\frac{24n+3r-4}{8n+3r-4}, 	\frac{24n+r}{8n+r} \}$,  $i=1,2,3$. Let 
$$
\mu_{\vec{v}}=\prod_{k=1}^{2}v_k^{q/q_k}
$$
 and $\frac{1}{q}=\frac{1}{q_1}+\frac{1}{q_2}$, $1< q <\max\{\frac{24n+3r-4}{16n}, \frac{24n+r}{16n}\}$ and 
let $q_3=q'$. Then
there is a constant $C=C_{\vec p ,\vec q, r,n}$ such that
	$$
	\|T_{\Omega}(f,g)\|_{L^{q}(\mu_{\vec{v}})}\leq C \|\Omega\|_{L^r} [\vec{v}]^{\max_{1 \leq i\leq 3}\{\frac{p_i}{q_i-p_i}\}}_{A_{\vec q,\vec p}}\|f\|_{L^{q_1}(v_1)}\|g\|_{L^{q_2}(v_2)}.
	$$
\end{corollary}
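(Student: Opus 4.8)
The plan is to deduce Corollary~\ref{collary2} from Theorem~\ref{thm1} via the now standard passage from multilinear sparse domination to quantitative weighted inequalities developed by Li, Martell and Ombrosi~\cite{LMO2018}. Essentially all of the analytic content already lies in Theorem~\ref{thm1}; the corollary is a soft consequence, and the argument splits into two steps: upgrading the diagonal sparse bound to an off-diagonal one, and then converting the off-diagonal sparse bound into the weighted estimate with the sharp power of $[\vec v]_{A_{\vec q,\vec p}}$.

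First I would pass from the diagonal $(p,p,p)$-sparse bound of Theorem~\ref{thm1} to an off-diagonal $(p_1,p_2,p_3)$-sparse bound. Put $p_0:=\min\{p_1,p_2,p_3\}$; since each $p_i$ exceeds $\max\{\frac{24n+3r-4}{8n+3r-4},\frac{24n+r}{8n+r}\}$, so does $p_0$, and Theorem~\ref{thm1} applies with exponent $p_0$. Because $L^{s}$-averages over a fixed cube are nondecreasing in $s$, one has $\langle f_i\rangle_{p_0,Q}\le\langle f_i\rangle_{p_i,Q}$ for every cube $Q$ and $i=1,2,3$, hence term by term
$$
\sup_{{\mathcal S}}\mathsf{PSF}_{{\mathcal S}}^{(p_0,p_0,p_0)}(f_1,f_2,f_3)\le\sup_{{\mathcal S}}\mathsf{PSF}_{{\mathcal S}}^{(p_1,p_2,p_3)}(f_1,f_2,f_3),
$$
so that
$$
|\langle T_{\Omega}(f_1,f_2),f_3\rangle|\le C\|\Omega\|_{L^{r}(\mathbb S^{2n-1})}\sup_{{\mathcal S}}\mathsf{PSF}_{{\mathcal S}}^{(p_1,p_2,p_3)}(f_1,f_2,f_3)
$$
for all bounded compactly supported $f_1,f_2,f_3$; that is, $T_{\Omega}$ is dominated by $(p_1,p_2,p_3)$-sparse forms with constant $C\|\Omega\|_{L^{r}}$.

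Second I would feed this into the weighted machinery. Let $\sigma:=\mu_{\vec v}^{1-q'}$ be the dual weight of $\mu_{\vec v}$ relative to the exponent $q$, so that, by density and duality,
$$
\|T_{\Omega}(f,g)\|_{L^{q}(\mu_{\vec v})}=\sup\big\{\,|\langle T_{\Omega}(f,g),h\rangle|\,:\,\|h\|_{L^{q'}(\sigma)}\le1\,\big\}.
$$
It then suffices to bound $\mathsf{PSF}_{{\mathcal S}}^{(p_1,p_2,p_3)}(f,g,h)$, uniformly over sparse families ${\mathcal S}$, by $C_{\vec p,\vec q,n}\,[\vec v]_{A_{\vec q,\vec p}}^{\beta}\,\|f\|_{L^{q_1}(v_1)}\|g\|_{L^{q_2}(v_2)}\|h\|_{L^{q'}(\sigma)}$ for a suitable power $\beta$. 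This is precisely the estimate carried out in~\cite{LMO2018}: on each $Q\in{\mathcal S}$ one redistributes powers of $v_1,v_2,\sigma$ among the three averages, invokes the defining inequality of $[\vec v]_{A_{\vec q,\vec p}}$ (legitimate exactly because $\vec p\prec\vec q$ and $q_3=q'$), replaces $|Q|$ by a constant multiple of $|E_Q|$ using the disjoint subsets $E_Q\subset Q$ afforded by sparseness, and sums by H\"older's inequality in the three functions together with the boundedness of the relevant Hardy--Littlewood maximal operators on the $L^{q_i}(v_i)$ (itself encoded in the $A_{\vec q,\vec p}$ condition). Tracking the powers of $[\vec v]_{A_{\vec q,\vec p}}$ produced at the H\"older/maximal-function step yields $\beta=\max_{1\le i\le3}\frac{p_i}{q_i-p_i}$ with $q_3=q'$. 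Combining the two steps and taking the supremum over $h$ gives the stated inequality; the hypothesis $1<q<\max\{\frac{24n+3r-4}{16n},\frac{24n+r}{16n}\}$ together with $q_3=q'$, $\vec p\prec\vec q$, and the threshold on the $p_i$ is exactly what guarantees that the class of admissible exponents is nonempty.

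The only genuinely delicate point is the quantitative bookkeeping in the second step---carrying the dependence on $[\vec v]_{A_{\vec q,\vec p}}$ through the cube-by-cube estimate and the final summation so as to land on the exponent $\max_i p_i/(q_i-p_i)$. This is handled exactly as in the existing quantitative weighted bounds for sparsely dominated multilinear operators, so no ideas beyond Theorem~\ref{thm1} are needed.
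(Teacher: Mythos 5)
Your proposal is correct and follows essentially the same route as the paper: the paper likewise passes (implicitly, via monotonicity of the averages) from the diagonal $(p,p,p)$ bound of Theorem~\ref{thm1} to a $(p_1,p_2,p_3)$-sparse bound, dualizes against the weight $\sigma=\mu_{\vec v}^{-q'/q}$, and then bounds the sparse form cube-by-cube using the $A_{\vec q,\vec p}$ condition, the sparseness sets $E_Q$, and the weighted maximal functions $M_{p_j,w_j}$, exactly the bookkeeping you outsource to the Li--Martell--Ombrosi/Culiuc--Di Plinio--Ou and Lerner--Nazarov machinery. The only difference is that the paper writes out the redistribution of weights ($w_j=v_j^{p_j/(p_j-q_j)}$, $f_j=g_jw_j^{1/p_j}$) explicitly rather than citing it, so no substantive gap remains in your argument.
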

\begin{corollary}\label{collary1}
	Let $\Omega \in L^{r}(\mathbb{S}^{2n-1})$ with $r>4/3$ and  $\int _{\mathbb{S}^{2n-1}}\Omega\, d\sigma=0$. For $w \in A_{p/2}$, $\max\{2,\frac{24n+3r-4}{8n+3r-4}, 	\frac{24n+r}{8n+r} \}<p<\max\{\frac{24n+3r-4}{8n},\frac{24n+r}{8n} \}$,  there exists a constant $C=C_{w,p,n,r }$ such that
	$$\|T_{\Omega}(f_1,f_2)\|_{L^{p/2}(w)}\leq C \|\Omega\|_{L^r}\|f_1\|_{L^p(w)}\|f_2\|_{L^p(w)}.$$
\end{corollary}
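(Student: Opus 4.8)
The plan is to deduce Corollary~\ref{collary1} from the sparse domination of Theorem~\ref{thm1} (equivalently, from Corollary~\ref{collary2}) together with the weighted theory of bilinear sparse forms; the whole argument amounts to choosing auxiliary exponents and unwinding the relevant Muckenhoupt conditions.

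First I would fix a sparse-average exponent $p_0$ lying strictly between $\max\{\tfrac{24n+3r-4}{8n+3r-4},\tfrac{24n+r}{8n+r}\}$ and $\min\{p,(p/2)'\}$; the two-sided restriction on $p$ in the statement is exactly what guarantees that this interval, together with the companion Muckenhoupt condition described below, can be met. Applying Theorem~\ref{thm1} with this $p_0$ gives, for all test functions,
$$|\langle T_{\Omega}(f_1,f_2),f_3\rangle|\ \le\ C\,\|\Omega\|_{L^{r}}\ \sup_{\mathcal S}\ \mathsf{PSF}_{\mathcal S}^{(p_0,p_0,p_0)}(f_1,f_2,f_3).$$
Writing $s=p/2$ and $\sigma=w^{1-s'}$, I would then invoke the (by now standard) weighted estimate for the bilinear $p_0$-sparse form: for every sparse family $\mathcal S$,
$$\mathsf{PSF}_{\mathcal S}^{(p_0,p_0,p_0)}(f_1,f_2,f_3)\ \lesssim\ \|f_1\|_{L^{p}(w)}\,\|f_2\|_{L^{p}(w)}\,\|f_3\|_{L^{s'}(\sigma)},$$
valid once $w$ belongs to the bilinear Muckenhoupt class at level $p_0$ adapted to the exponent triple $(p,p,s')$. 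Taking the supremum over $\mathcal S$, combining the two displays, and dualizing $f_3$ out of $L^{s'}(\sigma)=L^{p/2}(w)^{*}$, one arrives at
$$\|T_{\Omega}(f_1,f_2)\|_{L^{p/2}(w)}\ \le\ C\,\|\Omega\|_{L^{r}}\,\|f_1\|_{L^{p}(w)}\,\|f_2\|_{L^{p}(w)},$$
which is the claim. Alternatively, one may read this off directly from Corollary~\ref{collary2} by taking $q=p/2$, $q_1=q_2=p$ and $v_1=v_2=w$, so that $\mu_{\vec{v}}=w$, provided one checks that $(w,w)\in A_{\vec q,\vec p}$ for the chosen triple $\vec p$.

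The step that requires genuine verification --- everything harmonic-analytically substantive being already packaged in Theorem~\ref{thm1}, which is assumed here --- is that a scalar weight $w\in A_{p/2}$ does satisfy the bilinear $p_0$-Muckenhoupt condition used above for some admissible $p_0$. I would establish this by first exploiting the openness (reverse H\"older self-improvement) of $A_{p/2}$ to gain a small exponent margin, and then a H\"older-inequality manipulation converting that scalar information into the required bilinear condition with $p_0$-averages; the admissibility of the resulting $p_0$ --- namely that it can at once exceed the threshold of Theorem~\ref{thm1}, stay below $\min\{p,(p/2)'\}$, and be compatible with the weight computation --- is precisely what the lower bound $p>\max\{2,\tfrac{24n+3r-4}{8n+3r-4},\tfrac{24n+r}{8n+r}\}$ and the upper bound $p<\max\{\tfrac{24n+3r-4}{8n},\tfrac{24n+r}{8n}\}$ encode. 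The remaining ingredients --- duality of weighted Lebesgue spaces and the classical passage from a sparse form to a weighted norm inequality --- are routine.
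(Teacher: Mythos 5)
Your proposal is correct and follows essentially the same route as the paper: the paper also fixes auxiliary exponents $q_i$ strictly between the threshold of Theorem~\ref{thm1} and $\min\{p,(p/2)'\}$ (the two-sided restriction on $p$ being exactly what makes this possible), dualizes with $\rho=(p/2)'$ and the dual weight built from $w$, and reduces to the weighted bound for the $(q_1,q_2,q_3)$-sparse form, whose verification from $w\in A_{p/2}$ (via openness of the class and maximal-function bounds) it omits by citing Section 5.1 of \cite{Barron2017} --- the same step you sketch rather than execute. No substantive difference in approach.
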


\begin{remark}We make few comments about  Corollaries \ref{collary2} and \ref{collary1}. 
\begin{itemize}
	\item The class of weights   in Corollary \ref{collary2} is slightly different 
	than that  used in \cite{Barron2017}.
 	\item In Theorem A there is a restriction  $p_i>2$. It is interesting that in Corollary~\ref{collary2}, when $\frac{4}{3}<r< 8n$ 
it is easy to see that	$p_i>2$, $i=1,2$. However, when $r\geq 8n$, then $p_1$, $p_2$ could be   smaller than $2$. This means that, in some sense, $q_i$ enjoys more freedom in 
Corollary~\ref{collary2}, since we only require $q>1$ and there is no need to assume that each $q_i>2.$
	\item We guess that the index regions in the above two corollaries are far from optimal. To find the best region for the above weighted results should be a very interesting problem.
\end{itemize}
\end{remark}	

The article is organized as follows.  Section \ref{sect2} contains   definitions and basic lemmas. 
An analysis of  the Calder\'{o}n-Zygmund kernel is given  in Section \ref{sec3}.
Section \ref{sec4}  and  Section \ref{sec5} are devoted to the demonstration of the proof of Theorem \ref{thm1} 
and its corollaries.    Throughout this paper, the notation $\lesssim$ will be used to denote an inequality with  an
inessential constant on the right.  We denote by $\ell (Q)$ the side length of a   cube $Q $ in $ \mathbb{R}^n$ and by diam$(Q)$ its diameter. For $\lambda>0$  we use the notation $\lambda Q $ for  the cube with the same center as $ Q$ and   side length $\lambda\ell(Q)$.

\section{ Definitions and main lemmas} \label{sect2}
In this section we   consider a   general bilinear operator that commutes with translations
\begin{equation}\label{BOCT}
T[K](f_1,f_2)(x) = \textup{p.v.} \int_{\mathbb R^n}  \int_{\mathbb R^n} 
K(x-x_1,x-x_2) f_1(x_1) f_2(x_2) \, dx_1\, dx_2
\end{equation} 
 and assume it is a bounded  bilinear operator mapping $L^{r_1}(\mathbb{R}^n) \times L^{r_2}(\mathbb{R}^n)\rightarrow L^{\alpha}(\mathbb{R}^n)$ for some $r_1,r_2,  \alpha \geq 1$ with $\frac{1}{r_1}+\frac{1}{r_2}=\frac{1}{\alpha}$.  
It is assumed that the kernel $K$ of $T[K]$  has a decomposition of the form
\begin{align}\label{K1}
K(u,v )=\sum\limits_{s\in \mathbb{Z}}K_s(u,v ),
\end{align}
where $K_s$ {is a smooth truncation of $K$ that} enjoys the property
\begin{align*}
\textup{ supp} K_s \subset \big\{(u,v)\in \mathbb{R}^{2n} :\,\, 2^{s-2}<|u|<2^s,\, 2^{s-2}<|v|<2^s   \big\}.
\end{align*}

 The truncation of $T[K]$ is defined as
\begin{align}
T[K]_{t_1}^{t_2} (f_1,f_2)(x ):=\sum\limits_{t_1<s<t_2}\int_{\mathbb{R}^{ n}}\int_{\mathbb{R}^{ n}}K_s(x-x_1,x-x_2)f_1(x_1)f_2(x_2)\, dx_1dx_2,
\end{align}
where $0<t_1<t_2<\infty$.   
See Section 2.1 in \cite{Barron2017} for remarks on this type of truncated operators.
In this work, we assume that the truncated norm satisfies
\begin{align}\label{1}
\sup_{0<t_1<t_2<\infty}  \|T[K]_{t_1}^{t_2} \|_{{L^{r_1}\times L^{r_2} }\rightarrow L^{\alpha}}<\infty,
\end{align}
 for some $r_1,r_2,\alpha\ge 1$ satisfying $\frac{1}{r_1}+\frac{1}{r_2}=\frac{1}{\alpha}$. 
To study  bilinear operators $T$,  we  often work   with the trilinear form of the type $\langle T(f_1,f_2),f_3\rangle=\int_{\mathbb{R}^n}T(f_1,f_2)f_3(x)\,dx$. In our case, the trilinear truncated form is 
$$
\langle T[K]_{t_1}^{t_2}(f_1,f_2),f_3 \rangle=\int_{\mathbb R^n}  T[K]_{t_1}^{t_2}(f_1,f_2) f_3 \, dx .
$$
Denoting  by $C_T(r_1,r_2,\alpha)$
 the following constant
\begin{equation}\label{CT33}
C_T(r_1,r_2,\alpha):=\sup_{0<t_1<t_2<\infty} 
 \frac{ \big|  \langle T[K]_{t_1}^{t_2}(f_1,f_2),f_3 \rangle \big| }{ \| f_1\|_{ L^{r_1}}  \| f_2\|_{ L^{r_2}}  \| f_3\|_{ L^{ \alpha'}} }  \,  ,
\end{equation}
then \eqref{1} is equivalent to $C_T(r_1,r_2,\alpha)<\infty$.

{
\begin{remark}\label{Rnew} If a bilinear operator of the form \eqref{BOCT} is bounded from 
$L^{r_1}\times L^{r_2} \to L^\alpha$ with $\alpha \ge 1$, then so do all of its smooth truncations with kernels
$$
K(u,v) G (u/2^{t}) G(v/2^{t'}) 
$$
uniformly on $t,t'$. Here $G$ is any   function whose Fourier transform is integrable. 
\end{remark}

To see this, we express \eqref{BOCT} in multiplier form   as follows 
$$
\int_{\mathbb R^{2n} }  \widehat{G}(\xi_1',\xi_2') \bigg[ \int_{\mathbb R^{2n} }  
 \widehat{K}(\xi_1-\xi_1',\xi_2-\xi_2')  \widehat{f_1}(\xi_1)  \widehat{f_2}  (\xi_2) e^{2\pi i x\cdot (\xi_1+\xi_2)} d\xi_1d\xi_2 \bigg]    d\xi_1'\, d\xi_2'
$$
and then we pass the $L^\alpha(dx)$ norm   on the square bracket. 
}

\begin{definition}[{\bf {Stopping collection}}
\cite{Conde-Alonso2017}] 
Let $\mathcal{D}$ be a fixed dyadic lattice in $\mathbb{R}^n$
and $Q\in \mathcal D$ be a fixed dyadic cube in ${\mathbb{R}^n}$. A collection $\mathcal Q \subset \mathcal D$ of dyadic cubes is a $stopping$ $collection$ with top $ Q$ if the elements of $\mathcal Q$ 
satisfy 
$$
L,L' \in \mathcal Q, L\cap L'\neq \emptyset\Rightarrow L=L' 
$$
$$
 L\in\mathcal Q\Rightarrow L\subset 3Q, 
$$
and   enjoy the  separation properties
\begin{enumerate}
\item[(i)] if $L,L' \in \mathcal Q$, $|s_L-s_{L'}|\geq 8$, then $7L\cap7L'=\emptyset$.

\item[(ii)] $\bigcup\limits_{\substack{ L\in\mathcal Q \\ 3L\cap 2Q\neq \emptyset}}9L\subset\bigcup\limits_{L\in\mathcal Q}L=:sh\mathcal Q$.
\end{enumerate}
Here $s_L= \log_2 \ell (L)$, where  $\ell (L)$ is the length of the cube $L$. 
\end{definition}

Let $\cic{1}_A$ be the characteristic function of a set $A$. 
	We use $M_p $ to denote the power version of the Hardy-Littlewood maximal function
	$$
	M_p(f)(x)=\sup\limits_{x\in Q}\bigg(\frac{1}{|Q|} \int_{Q} |f(y)|^p dy \bigg)^{\frac{1}{p}},
	$$
where the supremum is taken over cubes $Q\subset {\mathbb{R}^n}$ containing $x$. 

We need the following definition.
\begin{definition} [\textbf{$\mathcal Y _p(\mathcal Q)$ norm},    \cite{Conde-Alonso2017}]
Let $1\leq p\leq \infty$ and let $\mathcal Y _p(\mathcal Q)$  be the subspace of $L^p(\mathbb{R}^n)$ of functions satisfying supp $ h\subset 3Q$ and
\begin{align}  \label{dfy}
\infty>\|h\|_{\mathcal Y _p(\mathcal Q)}:=
\left\{
\begin{array}{ll}
\max\big\{\|h\cic{1}_{\mathbb{R}^n\setminus sh\mathcal Q}\|_{\infty},\sup\limits_{L\in \mathcal Q}\inf\limits_{x\in \widehat{L} }{M_p h(x)}\big\}, &p<\infty, \\
\|h\|_{\infty}, &p=\infty,  \\
\end{array}
\right.
\end{align}
where $\hat{L}$ is the (nondyadic) $2^5$-fold dilation of $L$. We also denote by $\mathcal X _{p}({\mathcal Q})$ the subspace of $\mathcal Y _{p}({\mathcal Q})$ of functions satisfying
$$b=\sum\limits_{L\in\mathcal Q}b_L, \quad\text{supp}~ b_L \subset L.$$
Furthermore, we say $b \in \dot {\mathcal X}_p(\mathcal Q)$ if 
$$b\in {\mathcal X}_p(\mathcal Q),\quad \int_Lb_L=0, \quad \forall L\in \mathcal Q. $$
 $\|b\|_{{\mathcal X}_p(\mathcal Q)}$ denotes $\|b\|_{{\mathcal Y}_p(\mathcal Q)}$ when $b\in {\mathcal X}_p(\mathcal Q)$ and similar notation for  $b \in \dot {\mathcal X}_p(\mathcal Q)$. We may omit $ \mathcal Q $ and simply write $\|\cdot\|_{{\mathcal X}_p}$ or $\|\cdot\|_{\mathcal Y_p}$.
\end{definition}

Let $a\wedge b$ denote the minimum of two real numbers $a$ and $b$. Given a stopping collection  $\mathcal {Q}$   with top cube $Q$,  we define 
\begin{align}\label{3}
\mathcal{Q}_{t_1}^{t_2}(f_1,f_2,f_3) =\frac{1}{|Q|}\Big[\langle T[K]_{t_1}^{t_2\wedge{s_Q}}(f_1\cic{1}_{Q},f_2),f_3 \rangle -\sum_{\substack{ L\in\mathcal{Q}\\ L\subset Q}}\langle T[K]_{t_1}^{t_2\wedge{s_L}}(f_1\cic{1}_{Q},f_2),f_3 \rangle \Big].
\end{align}
Then the support condition   
$$
\textup{ supp} K_s \subset \big\{(u,v)\in \mathbb{R}^{2n} :\,\, 2^{s-2}<|u|<2^s,\, 2^{s-2}<|x_2|<2^s   \big\}.
$$
 gives that 
 $$
 \mathcal{Q}_{t_1}^{t_2}(f_1,f_2,f_3)=\mathcal{Q}_{t_1}^{t_2}(f_1\cic{1}_{Q},f_2I_{3Q},f_3\cic{1}_{3Q}).
 $$
  For simplicity, we will often suppress  
  the dependence of $\mathcal{Q}_{t_1}^{t_2}$ on $t_1$ and $t_2$ by writing  
 $\mathcal{Q}(f_1,f_2,f_3)=\mathcal{Q}_{t_1}^{t_2}(f_1,f_2,f_3)$, when there is no confusion. 

\begin{lemma} [\cite{Barron2017}]\label{thm3}
Let T be a bilinear operator with kernel $K$ as the above, such that $K$ can be decomposed as in \eqref{K1} and 
suppose that the constant $C_T$ defined in \eqref{CT33} satisfies 
$$
C_T= C_T(r_1, r_2,\alpha)   <\infty 
$$
for some $1\leq r_1,r_2,\alpha<\infty$ with $1/r_1+1/r_2=1/\alpha$. 
Assume that there exist indices $1\leq p_1, p_2,p_{3}\leq \infty$ and a positive constant $C_L$ such that for all finite truncations, all dyadic lattices $\mathcal{D}$, and all stopping   collections $\mathcal{P}$ 
with top cube $Q$, the quantity   $\Lambda_{\mathcal P} (f_1,f_2,f_3)= \mathcal Q_\mu^\nu(f_1,f_2,f_3) \, |Q|$
satisfies uniformly for all $\mu<\nu$:
\begin{align}\label{lem1}
\Lambda_{\mathcal{P}}(b,g_2,g_3 )&\leq C_L|Q|\|b\|_{\dot {\mathcal X}_{p_1}}\|g_2\|_{ {\mathcal Y}_{p_2}}\|g_3\|_{ {\mathcal Y}_{p_3}}; \nonumber \\ 
\Lambda_{\mathcal{P}}(g_1,b,g_3  )&\leq C_L|Q|\|g_1\|_{{\mathcal Y}_{\infty}}\|b\|_{\dot {\mathcal X}_{p_2}}\|g_3\|_{{\mathcal Y}_{p_3}} ;  \\ 
\Lambda_{\mathcal{P}}(g_1,g_2,b )&\leq C_L|Q|\|g_1\|_{ {\mathcal Y}_{\infty}}\|g_2\|_{ {\mathcal Y}_{\infty}}\|b\|_{\dot {\mathcal X}_{p_3}} . \nonumber 
\end{align}
Then there is a constant $c_n$ depending only on the dimension $n$ 
such that the quantity   $\Lambda_\mu^\nu (f_1,f_2,f_3)= \langle T[K]_\mu^\nu  (f_1,f_2)  , f_3\rangle $ satisfies 
$$
\sup\limits_{0<\mu<\nu<\infty}|\Lambda_{\mu}^{\nu}(f_1, f_2,f_{3})|\leq c_n[C_T+C_L]\sup\limits_{{{\mathcal S}}}\mathsf{PSF}_{\mathcal S}^{\vec{p}} (f_1, f_2,f_{3})
$$
for all $f_j\in L^{p_j}(\mathbb{R}^n)$ with compact support, where $\vec{p}=(p_1,p_2,p_{3})$ and the supremum on the right is taken with respect to all sparse collections ${{\mathcal S}}$. 
\end{lemma}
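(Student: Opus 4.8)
The plan is to obtain the sparse bound from the local hypotheses \eqref{lem1} by the recursive stopping-time argument of Conde-Alonso, Culiuc, Di Plinio and Ou, in the truncated bilinear form used in \cite{Barron2017}. The constant $C_T$ of \eqref{CT33} will absorb the non-local error terms produced when the trilinear form is localized at each step, while $C_L$ controls the oscillatory pieces.

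\textbf{Step 1: reduction and the recursive identity.} Fix a dyadic lattice $\mathcal{D}$, truncation levels $0<\mu<\nu<\infty$, and $f_j$ with compact support; it suffices to bound $|\Lambda_\mu^\nu(f_1,f_2,f_3)|$ uniformly in $\mu,\nu$. Choose $Q^0\in\mathcal{D}$ containing $\operatorname{supp}f_1\cup\operatorname{supp}f_2\cup\operatorname{supp}f_3$ with $s_{Q^0}>\nu$, so that $\nu\wedge s_{Q^0}=\nu$ and $f_1\cic{1}_{Q^0}=f_1$; then \eqref{3} is the exact identity
\begin{equation*}
\Lambda_\mu^\nu(f_1,f_2,f_3)=\Lambda_{\mathcal{P}_0}(f_1,f_2,f_3)+\sum_{L\in\mathcal{Q}(Q^0)}\langle T[K]_\mu^{\nu\wedge s_L}(f_1\cic{1}_{Q^0},f_2),f_3\rangle ,
\end{equation*}
where $\Lambda_{\mathcal{P}_0}=|Q^0|\,\mathcal{Q}_\mu^\nu(f_1,f_2,f_3)$ and $\mathcal{P}_0=\mathcal{Q}(Q^0)$ is obtained by selecting the maximal dyadic cubes $L\subset 3Q^0$ for which $\langle f_i\rangle_{p_i,L}>\Gamma_n\langle f_i\rangle_{p_i,3Q^0}$ for some $i\in\{1,2,3\}$, $\Gamma_n$ a large dimensional constant. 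As in \cite{Conde-Alonso2017}, one verifies that $\mathcal{P}_0$ is a stopping collection with top $Q^0$ (the separation properties follow from maximality after enlarging $\Gamma_n$) and that $|Q^0\setminus sh\mathcal{P}_0|\ge\tfrac{1}{2}|Q^0|$, which yields $\eta_n$-sparseness at the end. Using the support condition on $K_s$ and properties (i)--(ii) of the stopping collection, each tail $\langle T[K]_\mu^{\nu\wedge s_L}(f_1\cic{1}_{Q^0},f_2),f_3\rangle$ is, up to an error estimated directly by $C_T$ via \eqref{CT33}, again a form of the same type with top $L$; iterating over generations of stopping cubes (the recursion terminates since the $f_j$ are bounded with compact support, so only finitely many scales survive) produces a sparse family $\mathcal{S}\subset\mathcal{D}$ with
\begin{equation*}
|\Lambda_\mu^\nu(f_1,f_2,f_3)|\le\sum_{Q\in\mathcal{S}}\bigl|\Lambda_{\mathcal{Q}(Q)}(f_1,f_2,f_3)\bigr|+C_T\sum_{Q\in\mathcal{S}}|Q|\prod_{i=1}^{3}\langle f_i\rangle_{p_i,3Q} .
\end{equation*}

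\textbf{Step 2: estimating the stopping form.} It remains to prove $\bigl|\Lambda_{\mathcal{Q}(Q)}(f_1,f_2,f_3)\bigr|\lesssim(C_T+C_L)|Q|\prod_{i}\langle f_i\rangle_{p_i,3Q}$ for each $Q\in\mathcal{S}$; summing over $\mathcal{S}$ then gives $(C_T+C_L)\,\mathsf{PSF}_{\mathcal{S}}^{\vec{p}}(f_1,f_2,f_3)$, and the supremum over sparse families finishes the proof. Fix $Q$, set $\mathcal{Q}=\mathcal{Q}(Q)$, and decompose each $f_i$ relative to $\mathcal{Q}$ into a good part $g_i$ and a bad part $b_i=\sum_{L\in\mathcal{Q}}(f_i-\langle f_i\rangle_L)\cic{1}_L\in\dot{\mathcal{X}}_{p_i}(\mathcal{Q})$, so that $\Lambda_{\mathcal{Q}}$ is trilinear in the pairs $(g_i,b_i)$. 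By maximality of the stopping cubes and the Lebesgue differentiation theorem, $\|g_i\|_{\mathcal{Y}_\infty(\mathcal{Q})}$, $\|g_i\|_{\mathcal{Y}_{p_i}(\mathcal{Q})}$, and $\|b_i\|_{\dot{\mathcal{X}}_{p_i}(\mathcal{Q})}$ are all $\lesssim_n\langle f_i\rangle_{p_i,3Q}$. Expand $\Lambda_{\mathcal{Q}}(f_1,f_2,f_3)$ into the $2^3$ terms $\Lambda_{\mathcal{Q}}(\varepsilon_1,\varepsilon_2,\varepsilon_3)$ with $\varepsilon_i\in\{g_i,b_i\}$. For the all-good term the inputs are bounded and locally constant on the stopping cubes, so by the principal-value structure built into \eqref{3} it collapses to a genuinely bilinear expression at scales comparable to $\ell(Q)$, whence $\bigl|\Lambda_{\mathcal{Q}}(g_1,g_2,g_3)\bigr|\lesssim_n C_T|Q|\prod_i\|g_i\|_\infty\lesssim C_T|Q|\prod_i\langle f_i\rangle_{p_i,3Q}$. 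Every other term has a bad entry; moving the left-most bad entry into the distinguished slot, the entries to its left are good and hence lie in $\mathcal{Y}_\infty$, while the bad entry lies in $\dot{\mathcal{X}}_{p_i}$, so the corresponding line of \eqref{lem1} bounds that term by $C_L|Q|\prod_i\langle f_i\rangle_{p_i,3Q}$. Summing the finitely many terms completes Step 2.

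\textbf{Main obstacle.} The substantive part is the bookkeeping of Step 1: arranging the iteration so that the scale cutoffs $\nu\wedge s_L$ dovetail across generations and the spatial localization of each tail leaves only errors that collapse into a single use of $C_T$. This is precisely where the separation properties (i)--(ii) of the stopping collection and the $2^5$-dilates $\widehat{L}$ are needed, and where one must check that $\mathcal{S}$ is sparse with a purely dimensional constant. The remaining ingredients---the good/bad norm estimates, the trilinear expansion, and the passage to $\mathsf{PSF}_{\mathcal{S}}^{\vec{p}}$---are routine once that scaffolding is in place.
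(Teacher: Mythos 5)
The paper itself does not prove this lemma: it is imported verbatim from \cite{Barron2017}, whose argument in turn adapts the recursive stopping-time scheme of \cite{Conde-Alonso2017}. Your outline follows exactly that scheme (top cube, stopping cubes selected by amplified $\langle f_i\rangle_{p_i}$-averages, good/bad splitting, the three lines of \eqref{lem1} for the terms with a bad entry, $C_T$ for the all-good term), so the approach is the right one and matches the source.

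As a proof, however, there is a genuine gap, and it sits precisely where you place the ``main obstacle.'' The assertion in Step 1 that each tail $\langle T[K]_\mu^{\nu\wedge s_L}(f_1\cic{1}_{Q^0},f_2),f_3\rangle$ is ``up to an error estimated directly by $C_T$'' again a localized form of type \eqref{3} with top $L$, and that the collection of tops produced by the iteration is $\eta_n$-sparse, is not a bookkeeping afterthought: it is the content of the lemma. One must actually use the support of $K_s$ for $s\le s_L$ together with the separation properties (i)--(ii) and the dilates $\widehat L$ to replace $f_1\cic{1}_{Q^0}$ by a cut-off adapted to $L$, restart the stopping construction inside $L$ at the correct truncation $\nu\wedge s_L$, and verify that the error terms created at every generation telescope into a single sparse sum with constant $C_T$; none of this is carried out. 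Two smaller points: your termination argument is wrong as stated --- $f_j\in L^{p_j}$ with compact support need not be bounded; the recursion stops because the truncation window $\mu<\nu$ is finite, so forms with $s_L<\mu$ vanish. And the all-good term needs no appeal to a ``principal-value structure'': it is just H\"older on $3Q$ (respectively the disjoint $L\subset Q$) combined with the uniform truncated bound \eqref{CT33}, using $1/r_1+1/r_2+1/\alpha'=1$ to produce the factor $|Q|$.
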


Lemma \ref{thm3} is a crucial ingredient of our proof as it implies that
$$
|\langle T_{\Omega}(f_1,f_2),f_3 \rangle|\leq (C_T+C_L)\|\Omega\|_{L^{q}(\mathbb S^{2n-1})}\sup _{{\mathcal S}} \mathsf{PSF}_{{\mathcal S}}^{\vec{p}}(f_1,f_2,f_3),
$$
where $\vec{p}=(p_1,p_2,p_3)$.

Next we will consider the interpolation involving $\mathcal {Y}_q$-spaces. We only give the particular cases which we need to prove Theorem \ref{thm1}, however, more general results are available.

\begin{lemma}\label{4}
Let $0<A_2\le A_1<\infty$, $0<\epsilon<1$, and $q=1+2\epsilon$. Suppose that $\mathcal{Q}$ is a (sub)-trilinear form such that
\begin{align}
&|\mathcal{Q}(b,f,g)|\lesssim A_1 \|b\|_{\dot{\mathcal X}_1}\|f\|_{\mathcal Y_1}\|g\|_{\mathcal Y_1},\label{2.8}\\
&|\mathcal{Q}(b,f,g)|\lesssim A_2 \|b\|_{\dot{\mathcal X}_3}\|f\|_{\mathcal Y_3}\|g\|_{\mathcal Y_{3}}.\label{2.9}
\end{align}
Then we have 
$$
|\mathcal{Q}(b,f,g)|\lesssim A_1^{1-\epsilon}A_2^{{\epsilon}} \|b\|_{\dot{\mathcal X}_q}\|f\|_{\mathcal Y_q}\|g\|_{\mathcal Y_q}.
$$
\end{lemma}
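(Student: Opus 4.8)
The plan for proving Lemma~\ref{4} is to run, by hand, a real‑interpolation ($J$‑functional) argument: I would split each of $b$, $f$, $g$ into dyadic ``height layers'', estimate every resulting trilinear piece by \emph{whichever} of \eqref{2.8}, \eqref{2.9} is more favourable, and then sum. All three arguments must be decomposed simultaneously, since \eqref{2.9} needs control of $b$ in $\dot{\mathcal X}_3$ and of $f,g$ in $\mathcal Y_3$, which is not available once one has only normalized in $\dot{\mathcal X}_q$ and $\mathcal Y_q$.

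After normalizing $\|b\|_{\dot{\mathcal X}_q}=\|f\|_{\mathcal Y_q}=\|g\|_{\mathcal Y_q}=1$ (and discarding the trivial case $A_2=0$, where \eqref{2.9} forces $\mathcal{Q}\equiv 0$, so that $0<A_2\le A_1$), I would set, for $j\ge 1$, $f^{(j)}=f\,\cic{1}_{\{2^{j-1}<|f|\le 2^{j}\}}$ and $f^{(0)}=f\,\cic{1}_{\{|f|\le 1\}}$, so $f=\sum_{j\ge 0}f^{(j)}$, and likewise $g=\sum_{j\ge 0}g^{(j)}$. Writing $b=\sum_{L}b_L$ with $L$ ranging over the stopping cubes, I would cut each $b_L$ into the same bands $b_L^{(j)}$ and then restore cancellation, so that $b^{(j)}=\sum_L\big(b_L^{(j)}-\langle b_L^{(j)}\rangle_L\,\cic{1}_L\big)$ again has vanishing mean on every stopping cube and $b=\sum_{j\ge 0}b^{(j)}$. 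In every application of the lemma (cf. Lemma~\ref{thm3}, Theorem~\ref{thm1}) the functions are bounded with compact support, so these are finite sums; otherwise one truncates them and lets the truncation tend to infinity at the end, which is justified a posteriori by the bound below.

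The quantitative heart of the argument is the pair of layer estimates: for all $j\ge 0$, with constants independent of $j$ and of the stopping collection,
$$
\|f^{(j)}\|_{\mathcal Y_1},\ \|g^{(j)}\|_{\mathcal Y_1},\ \|b^{(j)}\|_{\dot{\mathcal X}_1}\lesssim 2^{-j(q-1)}\and \|f^{(j)}\|_{\mathcal Y_3},\ \|g^{(j)}\|_{\mathcal Y_3},\ \|b^{(j)}\|_{\dot{\mathcal X}_3}\lesssim 2^{j(1-q/3)} .
$$
For $f$ and $g$ these come from the pointwise facts that $M_1\big(h\,\cic{1}_{\{2^{j-1}<|h|\le 2^{j}\}}\big)\lesssim 2^{-j(q-1)}$ and $M_3\big(h\,\cic{1}_{\{2^{j-1}<|h|\le 2^{j}\}}\big)\lesssim 2^{j(1-q/3)}$ at every point where $M_q h\lesssim 1$ --- applied at a point $x_L\in\widehat L$ supplied by the hypothesis $\|h\|_{\mathcal Y_q}\le 1$ --- together with $|h|\le 1$ off $\bigcup_L L$, which annihilates the $L^\infty$‑part of $\|h^{(j)}\|_{\mathcal Y_p}$; here $q-1>0$ and $1-q/3>0$ precisely because $0<\epsilon<1$. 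For $b$ the bands $b_L^{(j)}$ satisfy the same pointwise bounds, $b$ vanishes off $\bigcup_L L$, and the mean‑zero corrections $\langle b_L^{(j)}\rangle_L\cic{1}_L$ and the overlap of the $b^{(j)}$ on large averaging cubes are absorbed using the pairwise disjointness of the stopping cubes and their separation properties (i)--(ii). I expect this $\dot{\mathcal X}_p$ estimate for the corrected $b^{(j)}$ to be the principal technical obstacle (the other delicate point being merely the justification of the rearrangement of the layer sums).

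With these in hand I would expand, by (sub)trilinearity, $|\mathcal{Q}(b,f,g)|\le\sum_{j_1,j_2,j_3\ge 0}\big|\mathcal{Q}\big(b^{(j_1)},f^{(j_2)},g^{(j_3)}\big)\big|$, and bound each term in two ways: $\lesssim A_1\,2^{-(j_1+j_2+j_3)(q-1)}$ by \eqref{2.8}, and $\lesssim A_2\,2^{(j_1+j_2+j_3)(1-q/3)}$ by \eqref{2.9}. Collecting the $\lesssim(1+J)^2$ triples with $j_1+j_2+j_3=J$ and taking the better bound for each $J$ gives
$$
|\mathcal{Q}(b,f,g)|\lesssim\sum_{J\ge 0}(1+J)^2\,\min\big\{A_1\,2^{-J(q-1)},\ A_2\,2^{J(1-q/3)}\big\} .
$$
The summand is a tent peaking where the two quantities coincide, namely at $J^{*}\sim\log_2(A_1/A_2)$, with peak value $A_1^{1-s}A_2^{s}$ where $s=\tfrac{q-1}{(q-1)+(1-q/3)}=\tfrac{3(q-1)}{2q}=\tfrac{3\epsilon}{1+2\epsilon}$; summing the two geometric tails yields $|\mathcal{Q}(b,f,g)|\lesssim\big(1+\log_2(A_1/A_2)\big)^2 A_1^{1-s}A_2^{s}$. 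Since $0<\epsilon<1$ forces $\epsilon<s<1$, and $\big(1+\log_2\tfrac1t\big)^2 t^{\,s-\epsilon}\lesssim 1$ for $t=A_2/A_1\in(0,1]$, the logarithmic factor is absorbed into $(A_2/A_1)^{s-\epsilon}$, leaving $|\mathcal{Q}(b,f,g)|\lesssim A_1^{1-\epsilon}A_2^{\epsilon}$. Undoing the normalization gives the stated inequality.
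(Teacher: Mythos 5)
Your argument is correct, but it executes the real--interpolation idea differently from the paper. The paper splits each argument into just \emph{two} pieces at a single height $\lambda$ (with the mean--zero correction on the stopping cubes for the first entry, exactly as in your $b^{(j)}$), applies \eqref{2.9} only to the all--small term and \eqref{2.8} to the remaining seven terms of the $8$--term expansion, and then optimizes the free parameter by taking $\lambda=A_2^{-1/2}$, which gives $|\mathcal Q(b,f,g)|\lesssim \lambda^{-2\epsilon}(7+A_2\lambda^2)\lesssim A_2^{\epsilon}$ directly. You instead perform a full dyadic layering in height, bound each of the (countably many) terms by the minimum of the two hypotheses, and sum a ``tent'' with a $(1+J)^2$ multiplicity loss, absorbing the resulting logarithm by means of the strict inequality $s=\tfrac{3\epsilon}{1+2\epsilon}>\epsilon$ (valid exactly because $\epsilon<1$). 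Your layer estimates $\|f^{(j)}\|_{\mathcal Y_1}\lesssim 2^{-j(q-1)}$, $\|f^{(j)}\|_{\mathcal Y_3}\lesssim 2^{j(1-q/3)}$ are the dyadic analogues of the paper's bounds for $h_i,\ell_i$, and the $\dot{\mathcal X}_p$ bounds for the corrected $b^{(j)}$ go through as you indicate: pointwise, the correction $\sum_L\langle b_L^{(j)}\rangle_L\cic{1}_L$ is $O(2^{-j(q-1)})$ because each average is controlled via $M_q b$ at a point of $\widehat L$, so disjointness of the stopping cubes (the separation properties (i)--(ii) are not actually needed) suffices; this is the same computation the paper only sketches for $h_1$. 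What each approach buys: the paper's two--piece splitting is shorter, needs only finite subadditivity of the sub--trilinear form, produces no logarithms, and yields a constant essentially uniform in $\epsilon$; your version identifies the sharper natural interpolation exponent $s$ before downgrading to $\epsilon$, but requires countable subadditivity (or trilinearity plus the limiting argument you mention, which is legitimate since $\mathcal Q$ is an integral form in the application and the layer tails vanish in $\mathcal Y_1$) and gives a constant that blows up as $\epsilon\to 0^+$ --- harmless here, since $\epsilon$ is fixed in the proof of Theorem \ref{thm1}.
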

\begin{proof}
Without loss of generality, we may assume $A_2 \le A_1=1$, and $\|b\|_{\dot{\mathcal X}_q}=\|f\|_{\mathcal Y_q}=\|g\|_{\mathcal Y_q}=1$, then it is enough to prove $\mathcal{Q}(b,f,g)\lesssim A_2^{{\epsilon}}$.

Fix $\lambda\geq 1$ and denote $f_{>{\lambda}}=f\cic{1}_{|f|>\lambda}$. We decompose $b=h_1+{ \ell} _1$, where
$$h_1:=\sum_{R\in \mathcal{P}}\big((b)_{>\lambda}-\frac{1}{|R|}\int_{\mathbb{R}}(b)_{>\lambda}\big)\cic{1}_{R}.$$
For $f$ and $g$, we decompose $f=h_2+{ \ell} _2$, $g=h_3+{ \ell} _3$, where $h_i:=(f_i)_{>\lambda}$, $i=2,3$.
Then it holds that
\begin {align*}
&\|h_1\|_{\dot{\mathcal X_1}}\lesssim \lambda ^{1-q}, \quad \|\ell_1\|_{\dot{\mathcal X_1}}\leq \|\ell_1\|_{\dot{\mathcal X_3}}\lesssim \lambda ^{1-\frac{q}{3}},\\
&\|h_2\|_{\mathcal Y_1}\lesssim \lambda ^{1-q}, \quad \|\ell_2\|_{\mathcal Y_1}\leq \|\ell_2\|_{\mathcal Y_3}\lesssim \lambda ^{1-\frac{q}{3}},\\
&\|h_3\|_{\mathcal Y_1}\lesssim \lambda ^{1-q}, \quad \|\ell_3\|_{\mathcal Y_1}\leq\|\ell_3\|_{\mathcal Y_{3}}\lesssim \lambda^{1-\frac{q}{3}}.
\end {align*}
The computational procedure will be put at the end of the this lemma. Now we   estimate $|\mathcal{Q}(b,f,g)|$ by the sum of the following eight terms
\begin{align*}
 \aligned {}& |\mathcal{Q}(h_1,h_2,h_3)|+|\mathcal{Q}(\ell_1,h_2,h_3)|+|\mathcal{Q}(h_1,\ell_2,h_3)|+|\mathcal{Q}(h_1, h_2, { \ell} _3)|\\&
\quad+|\mathcal{Q}(\ell_1,\ell_2,h_3)|+|\mathcal{Q}(\ell_1,h_2,\ell_3)|+|\mathcal{Q}(h_1,\ell_2,\ell_3)|+|\mathcal{Q}(\ell_1,\ell_2,\ell_3)|.\endaligned
\end{align*}
For the last term we use   assumption \eqref{2.9} while  we use     \eqref{2.8} to estimate the remaining seven terms. It follows that
$$
|\mathcal{Q}(b,f,g)|\lesssim \lambda ^{3-3q}+3\lambda ^{2-2q}+3\lambda^{1-q}+A_2\lambda^{3-q}.
$$
Noting that $ 1-q=-2\epsilon $ and $\lambda \geq1$, then we have
 \begin{align}
    |\mathcal{Q}(b,f,g)|&\lesssim 3\lambda ^{-2\epsilon}+3\lambda ^{-4\epsilon}+\lambda^{-6\epsilon}+A_2\lambda^{3-q}\nonumber\\
                   &\lesssim 7\lambda ^{-2\epsilon}+A_2\lambda^{2-2\epsilon}\nonumber\\
&\lesssim\lambda^{-2\epsilon}(7+A_2\lambda^{2})\label{2}.
  \end{align}
Let $\lambda=A_2^{-\frac{ 1}{2}}$, then $|\mathcal{Q}(b,f,g)|\lesssim A_2^{\epsilon}.$

It remains to show the estimates for $h_i$ and ${\ell} _i.$
We   only demonstrate how to compute $\|{ \ell}  _1\|_{\mathcal Y_2}\lesssim \lambda ^{1-\frac{q}{3}}$ as the estimates for $h_1,h_2,h_3,\ell _2,\ell _3$ follow in a similar way. Rewrite
  $$
 {\ell} _1=b\cic{1}_{\mathbb{R}^n \backslash sh \mathcal{P}} +\sum\limits_{R}(b)_{\leq\lambda}\cic{1}_R+\sum\limits_{R}\frac{1}{|R|}\int_{R}(b)_{>\lambda}\cic{1}_R:=I+II+III.
  $$ 
 
 From the definition in  \eqref{dfy}  we know 
 $$
 \|b\cic{1}_{\mathbb{R}^n \backslash sh \mathcal{P}}\|_{\mathcal{Y}_3}=0\lesssim \lambda ^{1-\frac{q}{3}} .
 $$
Moreover, it is easy to see that
 $$
 II=b\cic{1}_{b_{\leq \lambda}\cap sh\mathcal{P}}=b\cic{1}_S ,
 $$
  where
  $$
  S=b_{\leq \lambda}\cap sh\mathcal{P} .
  $$

 Combining \eqref{dfy} and using the H\"{o}lder's inequality, we have
 $$ \|b\cic{1}_S\|_{\mathcal{Y}_3}=\sup\limits_{R}\inf\limits_{x\in \widehat{R} }M_2b\cic{1}_S=\sup\limits_{R}\inf\limits_{x\in \widehat{R} }\sup\limits_{x\in Q}\Big(\frac{1}{|Q|}\int_{S\cap Q} |b|^3\Big)^{\frac{1}{3}}
 \leq \lambda ^{1-\frac{q}{3}} \|b\|_{\dot{\mathcal X}_q} \leq \lambda ^{1-\frac{q}{3}}.$$
Now we are in the position to consider $III$. It is easy to see that 
$$III \leq \sum\limits_{R}\frac{1}{|\widehat{R}|}\int_{\widehat{R}}(b)_{>\lambda}\cic{1}_R \leq \sum\limits_{R} \inf\limits_{x\in \widehat{R}}M_q b \cic{1} _R \leq  \sum\limits_{R}  \cic{1}_R.$$ Therefore, by the fact
$$ \|\sum\limits_{R}  \cic{1}_R\|_{\mathcal{Y}_3}\leq 1 \leq \lambda ^{1-\frac{q}{3}} , 
$$
it follows that 
$$
\| { \ell} _1\|_{\mathcal Y_3}\lesssim \lambda ^{1-\frac{q}{3}} .
$$
This finishes the proof of Lemma \ref{4}.
\end{proof} 

\section{Analysis of the kernel}\label{sec3}
In Section \ref{sect2}, we    discussed the generalized kernel $K$. Here we specialize to rough kernels. For fixed $\Omega$ in $L^r(\mathbb S^{2n-1})$ we consider the kernel
\begin{equation}\label{defKKO}
K( u,v)=\frac{\Omega\big(( u, v)/|(u,v)|\big)}{|( u, v)|^{ 2n}}. 
\end{equation}
{
We introduce the relevant notation. Define $\|[K]\|_p$ and $w_{j,p}[K]$ as follows: 
$$
\|[K]\|_p:=\sup\limits_{s\in{\mathbb{Z}}}2^{\frac{2sn}{p'}}\big(\|K_s(u,v)\|_{L^p(\mathbb{R}^{2n})}\big),
$$
$$w_{j,p}[K]=\sup\limits_{s\in \mathbb Z} 2^{\frac{2sn}{p'}}\sup\limits_{h\in {\mathbb{R}^n},|h|<2^{s-j-c_m}}(\|K_s(u,v)-K_s(u+h,v+h)\|_{p}).$$
}

 From the work in \cite{Barron2017}, we know that if the kernel  satisfies  $\|[K]\|_{p}<\infty$ and $\sum_{j=1}^{\infty}w_{j,p}[K]<\infty$, then the assumption \eqref{lem1} of Lemma \ref{thm3} holds.
However, it is difficult to verify $\|[K]\|_{p}<\infty$ and $\sum_{j=1}^{\infty}w_{j,p}[K]<\infty$ in the case $K(u,v)={\Omega((u,v)/|(u,v)|)}{|(u,v)|^{-2n}}$ with $\Omega \in L^{r}(\mathbb{S}^{2n-1})$ for $r\neq \infty$. We overcome this difficulty by using the method of Littlewood-Paley decomposition. That is, we decompose $K=\sum\limits_{j=-\infty}^{\infty} K_j$ and then actually show that each $ K_j$ satisfies the above properties.  We   establish below a key lemma 
concerning the rough kernel 
$K(u,v)={\Omega((u,v)/|(u,v)|)}{|(u,v)|^{-2n}}$.

\medskip

A bilinear Calder\'{o}n-Zygmund kernel $L$   (see \cite{{GraTor}}) 
is a function defined away from the diagonal  
on  $\mathbb{R}^{2 n}$ that satisfies 
(for some bound $A>0$) 
\begin{enumerate}
	\item the size condition 
	$$
	|L(u,v )|\leq\frac{A}{\big|(u,v)\big|^{2n}}, \qquad (u,v)\neq 0
	$$
	\item  the smoothness condition
	$$
	|L\big( (u,v)-(u',v')\big) -L(u,v )|\leq\frac{A|(u',v')|^{\epsilon}}{\big|(u,v)\big|^{2n+\epsilon}},
	$$
\end{enumerate}
when $0<\frac32 |(u',v')|\leq  |(u,v)| $, $0<\epsilon <1$. Such kernels 
give rise to bilinear Calder\'{o}n-Zygmund operators that commute with translations 
in the following way: 
$$
S(f,g)(x)=\textup{p.v.} \int_{\mathbb R^n}  \int_{\mathbb R^n}  L(x-x_1,x-x_2) f(x_1) g(x_2) \, dx_1\, dx_2.
$$

Unfortunately, if $\Omega $ lies in $L^r(\mathbb S^{2n-1})$ with $r<\infty$, then the associated $K$ given by 
\eqref{defKKO} is not a   bilinear Calder\'{o}n-Zygmund kernel, but we can decompose it as a sum 
of Calder\'{o}n-Zygmund kernels.
Given   a rough bilinear   kernel $K(u,v)={\Omega((u,v)/|(u,v)|)}{|(u,v)|^{-2n}}$  as in \eqref{defKKO},
we decompose it as follows. We fix a smooth function $\alpha$ in ${\mathbb	{R}^+}$ such that
$ \alpha(t)= 1$, for $t\in (0,1]$,
$ \alpha(t)\in (0,1)$, for $t\in (1,2)$ and
$ \alpha(t)= 0$,  for $t\in [2,\infty)$.
For $(u,v)\in {\mathbb{R}^{2n}}$ and $j\in{\mathbb{Z}}$ we introduce the functions 
$$
\beta  (u,v)=\alpha \big ( |(u,v)|\big)-\alpha\big(2 |(u,v)|\big).
$$
$$
\beta _j(u,v)=\beta \big (2^{-j} (u,v) \big) .
$$
We denote $\Delta_j$ the Littlewood-Paley operator $\Delta_j f=\mathcal{F}^{-1}(\beta_j \widehat{f})$. Here and throughout this paper $\mathcal{F}^{-1}$
denotes the inverse Fourier transform, which is defined via $$\mathcal{F}^{-1}(g)(x)=\int _{\mathbb{R}^n} g(\xi)e^{2\pi ix\cdot \xi}d\xi=\widehat{g}(-x),$$ where $\widehat{g}$ is the  Fourier transform of $g$.
Denote 
\begin{equation}\label{91}
K^i=\beta_i K
\end{equation}
and  
\begin{equation}\label{92}
K^i_j=\Delta_{j-i}K^i 
\end{equation}
for $i,j\in \mathbb{Z}$. Then we decompose the kernel $K$ as follows:
\begin{equation}\label{93}
K=\sum\limits_{j=-\infty}^{\infty}K_j,\ \quad \hbox{with\ } K_j=\sum\limits_{i=-\infty}^{\infty}K^i_j.
\end{equation}
The following lemma plays a crucial role in our analysis.

\begin{lemma}\label{lk}
	Let $K(u,v)=\Omega((u,v)/|(u,v)|){|(u,v)|^{-2n}}$ and $\Omega\in L^q(\mathbb{S}^{2n-1})$, $1<q\le \infty$,  $j\in \mathbb{Z}$. Then for any $0<\epsilon<1$, there is a constant $C_{n,\epsilon}$ such that the function 
	$$
(u,v ) \mapsto	K_j(u ,v)=\sum\limits_{i\in\mathbb{Z}}K_j^i(u ,v)
	$$
	is a bilinear Calder\'{o}n-Zygmund kernel with bound $A\leq C_{n,\epsilon}\|\Omega\|_{L^q}2^{\max(0,j)(\epsilon+2n/q)}$.
\end{lemma}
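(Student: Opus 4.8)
The plan is to establish the two defining conditions of a bilinear Calder\'on--Zygmund kernel for $K_j=\sum_{i\in\mathbb Z}K_j^i$ by exploiting the dyadic structure of the decomposition \eqref{91}--\eqref{93}. First I would record the elementary pointwise facts: the function $K^i=\beta_i K$ is supported in the annulus $|(u,v)|\sim 2^i$, and on that annulus $|(u,v)|^{-2n}\sim 2^{-2ni}$, so $K^i$ is a bounded multiple of $2^{-2ni}\Omega((u,v)/|(u,v)|)$ localized there. Applying the Littlewood--Paley piece $\Delta_{j-i}$ in the $2n$-dimensional variable $(u,v)$ costs a kernel $\psi_{j-i}=2^{2n(j-i)}\check\beta(2^{j-i}\cdot)$ with rapid decay, and from this one gets the basic bounds on $K_j^i$: a size bound of the form $|K_j^i(u,v)|\lesssim \|\Omega\|_{L^q}\, 2^{-2ni}\,\min(1, 2^{-(j-i)})^{N}$-type decay after using that $\Omega\in L^q\subset L^1$, and a Lipschitz bound $|K_j^i((u,v)-(u',v'))-K_j^i(u,v)|\lesssim |(u',v')|\,2^{(j-i)}\cdot(\text{size bound})$ coming from the gradient of $\psi_{j-i}$. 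The key quantitative point, which is where the roughness of $\Omega$ enters, is that summing the $L^q\to L^\infty$ cost of $\Delta_{j-i}$ against an $L^q$ function on a $2^i$-annulus and then converting to pointwise bounds gives exactly the claimed power $2^{\max(0,j)(\epsilon+2n/q)}$; I would isolate this as the main computational lemma.

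Next I would verify the size condition $|K_j(u,v)|\le A|(u,v)|^{-2n}$. Fix $(u,v)$ with $|(u,v)|\sim 2^s$. Only the terms with $i$ near $s$ contribute meaningfully, because when $i$ is far from $s$ the support of $\beta_i$ together with the rapid decay of the Littlewood--Paley kernel $\psi_{j-i}$ forces the tail to be negligible; this lets me replace the sum over $i\in\mathbb Z$ by an effectively finite sum (with geometric tails) and sum the size bounds for $K_j^i$. The output is $|K_j(u,v)|\lesssim \|\Omega\|_{L^q}\,2^{\max(0,j)(\epsilon+2n/q)}\,2^{-2ns}\sim A|(u,v)|^{-2n}$. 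For the smoothness condition, I would work under the hypothesis $0<\tfrac32|(u',v')|\le|(u,v)|\sim 2^s$ and split the sum over $i$ (equivalently over the scale $\ell=j-i$ of the Littlewood--Paley piece): for the ``low-frequency'' pieces where the frequency scale $2^{j-i}$ is $\lesssim |(u',v')|^{-1}$ I use the Lipschitz bound on $K_j^i$ (which gains a factor $|(u',v')|\,2^{j-i}$), and for the ``high-frequency'' pieces I use the size bound twice (on $K_j^i((u,v)-(u',v'))$ and on $K_j^i(u,v)$ separately) together with the rapid decay in $2^{j-i}|(u',v')|$. Balancing the two regimes produces the gain $(|(u',v')|/|(u,v)|)^{\epsilon}$ for any $0<\epsilon<1$, with the same constant $C_{n,\epsilon}\|\Omega\|_{L^q}2^{\max(0,j)(\epsilon+2n/q)}$ in front; the role of $\epsilon<1$ is precisely that one cannot push the interpolation to the endpoint because $\Omega$ is merely integrable on the sphere, not smooth.

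The main obstacle I anticipate is the bookkeeping needed to pass from $L^q$ control of $\Omega$ on the sphere to pointwise control of $K_j^i$ and its differences after convolution with $\psi_{j-i}$, while tracking uniformity in $i$ and the exact exponent in $2^{\max(0,j)(\epsilon+2n/q)}$. Concretely, convolving a function supported on the $2^i$-annulus and of size $2^{-2ni}|\Omega(\cdot)|$ against $\psi_{j-i}$ and then taking a sup over $(u,v)$ requires a H\"older estimate in which the measure of the relevant region is $\sim 2^{2ni}$, so one picks up $2^{2ni}\cdot 2^{-2ni}\cdot 2^{2ni/q}\cdot(\text{$L^q$ mass of }\Omega)$-type factors; keeping these straight, and seeing that the net dependence on $i$ after summation telescopes to $2^{\max(0,j)(\cdot)}$ rather than something divergent, is the delicate part. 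Everything else is a routine, if lengthy, Littlewood--Paley estimate of the kind carried out in \cite{Grafakos2018, Barron2017}, and I would organize the write-up so that the size and smoothness estimates for the single block $K_j^i$ are proved once and then summed in $i$ for both conditions.
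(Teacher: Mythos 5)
Your overall architecture is the same as the paper's: decompose $K_j=\sum_i K_j^i$ with $K_j^i=K^i*\mathcal{F}^{-1}(\beta_{i-j})$, prove pointwise size and difference estimates for each block by H\"older against the dilated Littlewood--Paley bump, and then sum in $i$ with a cutoff at the scale dictated by $|(u',v')|$ (resp.\ $|(u,v)|$). But two of your stated block estimates are not correct, and they are load-bearing. First, the size bound $|K_j^i(u,v)|\lesssim\|\Omega\|_{L^q}\,2^{-2ni}\min\bigl(1,2^{-(j-i)}\bigr)^{N}$ is false: when $j-i>0$ there is no pointwise rapid decay in the frequency parameter to be had, precisely because $\Omega$ is merely in $L^q$ (the block $K^i$ has no angular smoothness). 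The best available pointwise bound is the H\"older estimate $|K_j^i|\lesssim\|K^i\|_{L^q}\|\mathcal{F}^{-1}(\beta_{i-j})\|_{L^{q'}}\approx\|\Omega\|_{L^q}\,2^{-2ni}\,2^{2n\max(j,0)/q}$, i.e.\ a \emph{growth} $2^{2nj/q}$ for $j>0$, and this growth is exactly the origin of the factor $2^{\max(0,j)2n/q}$ in the lemma. For the same reason, your treatment of the high-frequency half of the smoothness estimate (``size bound twice together with the rapid decay in $2^{j-i}|(u',v')|$'') has no decay to invoke. In the paper's proof the convergence of that half of the $i$-sum comes from a different mechanism: the single-block bound \eqref{P3} carries the spatial-tail factor $2^{i\epsilon}2^{-\min(j,0)\epsilon}/|x|^{2n+\epsilon}$, and summing $2^{i\epsilon}$ over $i\le N^*$ with $2^{N^*-j}\approx|y|$ produces $(2^{j}|y|)^{\epsilon}$, hence the loss $2^{\max(0,j)\epsilon}$ together with the gain $|y|^{\epsilon}/|x|^{2n+\epsilon}$; the complementary range $i>N^*$ is handled by the gradient (mean-value) bound with gain $|y|/2^{i-j}$, which you do have. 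Without a substitute for the $\epsilon$-tail factor, your high-frequency sum does not close.

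Second, the computation you yourself flag as ``the delicate part''---converting $L^q$ control of $\Omega$ into the precise constant $2^{\max(0,j)(\epsilon+2n/q)}$---is asserted rather than performed, and the bookkeeping you sketch ($2^{2ni}\cdot2^{-2ni}\cdot2^{2ni/q}$ times the $L^q$ mass of $\Omega$) contains no $j$-dependence at all, so it cannot produce the claimed bound. What is actually needed (and what the paper does around \eqref{P1}) is: $\|K^i\|_{L^q(\mathbb{R}^{2n})}\approx2^{-2in/q'}\|\Omega\|_{L^q(\mathbb{S}^{2n-1})}$; the domination $|\mathcal{F}^{-1}(\beta_{i-j})|\lesssim 2^{2(i-j)(-n)}$-normalized bump with polynomial decay; and then a case analysis in $j\le0$ versus $j>0$, and in whether $|x-ty|\sim2^i$ or not, to extract simultaneously the $2^{2n\max(j,0)/q}$ loss and the decay $2^{i\epsilon}/(2^{\min(j,0)\epsilon}|x|^{2n+\epsilon})$, uniformly in $t\in[0,1]$ so that the same estimate feeds both the size bound ($t=0$) and the gradient bound. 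Supplying this computation, and replacing the spurious $\min(1,2^{-(j-i)})^{N}$ decay by the $\epsilon$-tail mechanism above, is what your write-up is missing; with those corrections your plan coincides with the paper's proof.
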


\begin{proof}
	We need to show 
	\begin{align}
&	|K_j(u,v)|\leq  C_{n,\epsilon}\|\Omega\|_{L^q} \frac{2^{\max(0,j)(\epsilon+2n/q)}}{|(u,v)|^{2n}}, \label{size} \\
&	|K_j\big((u,v)-(u',v')\big)-K_j(u,v)|\leq C_{n,\epsilon}\|\Omega\|_{L^q} \frac{2^{\max(0,j)(\epsilon+2n/q)}|(u',v')|^{\epsilon}}{|(u,v)|^{2n+\epsilon}}, \label{smooth}
	\end{align}
when $0<\frac{3}{2} |(u',v')|\leq  |(u,v)| $.
	
		Given  $x,y\in \mathbb{R}^{2n}$ with $|x|\geq \frac32 |y|>0$,  we claim that inequality \eqref{smooth} follows from
		\begin{align} 
 	|K_j^i(x-y)-K_j^i(x)|\leq C_{n,\epsilon}\|\Omega\|_{L^q}\min\Big(1,\frac{|y|}{2^{i-j}}\Big)\frac{2^{\max(0,j)2n/q}}{2^{-i\epsilon}2^{\min(j,0)\epsilon}|x|^{2n+\epsilon}}   \label{P3}
		\end{align}
for some   $\epsilon\in(0,1)$ and all $i,j\in \mathbb Z$. 

To show this claim,  let's assume for the time being that inequality \eqref{P3} is true.
	Pick  an integer $N^*$ such that $ (\log_2|y| )+j \leq N^* <( \log_2|y| )+j +1$.
	We need to consider two cases $j\ge 0$ and $j<0.$
	
	{\bf The Case for $j\ge 0$.}
	If $j\geq 0$, then for $i$ satisfies $2^{i-j}\leq |y|$, which means $i\leq N^*$. Therefore, we have 
	\begin{align*}
	\sum_{i\leq N^*}	|K_j^i(x-y)-K_j^i(x)|&\leq  \|\Omega\|_{L^q(\mathbb{S}^{2n-1})} 	\sum_{i\leq N^*}\frac{2^{j2n/q}}{2^{-i\epsilon}|x|^{2n+\epsilon}}
\\&	\leq \|\Omega\|_{L^q(\mathbb{S}^{2n-1})} \frac{2^{j(\epsilon+2n/q)}|y|^{\epsilon}}{|x|^{2n+\epsilon}}.
	\end{align*}
	
	If $j\geq 0$, then for $i$ satisfies $2^{i-j}> |y|$, which implies that $i> N^*$, it holds that
	\begin{align*}
	\sum_{i> N^*}	|K_j^i(x-y)-K_j^i(x)|&\leq  \|\Omega\|_{L^q(\mathbb{S}^{2n-1})}  	\sum_{i > N^*} \frac{|y|}{2^{i-j}}  \frac{2^{j2n/q}}{2^{-i\epsilon}|x|^{2n+\epsilon}}
\\&	\leq \|\Omega\|_{L^q(\mathbb{S}^{2n-1})} 	\frac{2^{j(\epsilon+2n/q)}|y|^{\epsilon}}{|x|^{2n+\epsilon}}.
	\end{align*}
	
	{\bf The case for $j<0$.} If $j< 0$, then for $i\leq N^*$, it holds that
	\begin{align*}
	\sum_{i\leq N^*}	|K_j^i(x-y)-K_j^i(x)|&\leq  \|\Omega\|_{L^q(\mathbb{S}^{2n-1})} 	\sum_{i\leq N^*}\frac{1}{2^{-i\epsilon}2^{j\epsilon}|x|^{2n+\epsilon}}
\\&	\leq \|\Omega\|_{L^q(\mathbb{S}^{2n-1})} \frac{|y|^{\epsilon}}{|x|^{2n+\epsilon}}.
	\end{align*}	
	
	If $j< 0$, then for $i> N^*$, we obtain
	\begin{align*}
	\sum_{i > N^*}	|K_j^i(x-y)-K_j^i(x)|&\leq  \|\Omega\|_{L^q(\mathbb{S}^{2n-1})}  	\sum_{i > N^*} \frac{|y|}{2^{i-j}}  \frac{1}{2^{-i\epsilon}2^{j\epsilon}|x|^{2n+\epsilon}}
\\&	\leq \|\Omega\|_{L^q(\mathbb{S}^{2n-1})} 	\frac{|y|^{\epsilon}}{|x|^{2n+\epsilon}}.
	\end{align*}
Summing up in all, it yields that
	\begin{align*} 
	|K_j(x-y)-K_j(x)|\leq C_{n,\epsilon}\|\Omega\|_{L^q}\frac{2^{\max(0,j)(\epsilon+2n/q)}|y|^{\epsilon}}{|x|^{2n+\epsilon}}. 
	\end{align*}
This finishes the proof of the claim. 

Therefore, to prove inequality \eqref{smooth}, it is sufficient to prove \eqref{P3}.

For $i\in \mathbb{Z}$, and $x\in \mathbb R^{2n}$,  it is easy to see that
	\begin{align*}
	|K^i(x )|&\leq \frac{\Omega\big(  x  /|x| \big) }{|x|^{2n}}\cic{1} _{\frac{1}{2}\leq \frac{|x|}{2^i}\leq 2}(x).
	\end{align*}
	Therefore
	$$
	\|K^i\|_{L^q(\mathbb R^{2n})}\leq \frac{1}{2^{2in}}\Big(\int_{2^{i-1}}^{2^{i+1}}\int_{\mathbb{S}^{2n-1}}|\Omega(\theta)|^qr^{2n-1}d\theta dr\Big)^{\frac{1}{q}}  { \, \approx\,} 2^{-2in/{q'}}\|\Omega\|_{L^q(\mathbb{S}^{2n-1})}.
	$$
Let $\Psi(x)={(1+|x|)^{-2n-1}}$ be defined on $\mathbb R^{2n}$. 	Note that
$$
| \mathcal{F}^{-1}(\beta_{i-j})(x)|\leq C_{\beta}2^{2(i-j)n}(1+2^{i-j}|x|)^{-2n-1}=C_{\beta}\Psi_{i-j}(x) , 
$$
then, using H\"{o}lder's inequality, it yields that $K_j^i= K^i * \mathcal{F}^{-1}(\beta_{i-j})$ enjoys the following property
\begin{align}
|K^i_j(x-ty)|
{  \, \lesssim\,} \|K^i\|_{L^q}\Big( \int_{2^{i-1}\leq |{z}|\leq{2^{i+1}}}|\Psi_{i-j}(x-ty-z)|^{q'}dz\Big)^{\frac{1}{q'}}, \label{P1}
\end{align}
for $x,y\in \mathbb R^{2n}$ and $t\in[0,1]$. 

Let $z=2^i z'$, for $x,y\in \mathbb R^{2n}$, it follows that
\begin{align*}
&\Big(\int_{2^{i-1}\leq |{z}|\leq{2^{i+1}}}\Big(\frac{2^{-2(i-j)n}}{(1+2^{-(i-j)}|x-ty-z|)^{2n+1}}\Big)^{q'}dz\Big)^\frac{1}{q'} \nonumber\\
&{ \, \lesssim\,} \Big(\int_{\frac{1}{2}\leq |{z'}|\leq{2}}\frac{1}{(1+2^j|\frac{x-ty}{2^i}-z'|)^{(2n+1)q'}}dz'\Big)^{\frac{1}{q'}} 2^{-2(i-j)n}2^{\frac{2in}{q'}}\\
&:=N_i^j(x,y,t).\label{P}
\end{align*}	
 If $j\leq 0$, then 
$$
N_i^j(x,y,t) { \, \lesssim\,}  \frac{1}{\big(1+2^j\max\{|\frac{x-ty}{2^i}|,1\}\big)^{2n+\epsilon}} 2^{-2(i-j)n}2^{\frac{2in}{q'}} {\, \lesssim\,} \frac{2^{2in/{q'}}2^{i\epsilon}}{2^{j\epsilon}{|x|}^{2n+\epsilon}}. 
$$
If $j>0$, we claim that 
$$
N_i^j(x,y,t){ \, \lesssim\,}  \frac{2^{2jn/q}2^{2in/{q'}}2^{i\epsilon}}{{|x|}^{2n+\epsilon}} .
$$
  Indeed, for $\frac{1}{4}\leq|\frac{x-ty}{2^i}| \leq4$, it holds that
$$
N_i^j(x,y,t){ \, \lesssim\,} 2^{-\frac{2in}{q}}2^{\frac{2jn}{q}} \leq \frac{2^{-\frac{2in}{q}}2^{\frac{2jn}{q}}}{\big(1+|\frac{x-ty}{2^i}|\big)^{2n+\epsilon}}{ \, \lesssim\,} \frac{2^{2jn/q}2^{2in/{q'}}2^{i\epsilon}}{{|x|}^{2n+\epsilon}}.
$$
As for the case $|\frac{x-ty}{2^i}|>4$ or $|\frac{x-ty}{2^i}|<\frac{1}{4}$, it follows that
$$
N_i^j(x,y,t){ \, \lesssim\,} \frac{1}{\big(1+2^j\max\{|\frac{x-ty}{2^i}|,1\}\big)^{2n+\epsilon}} 2^{-2(i-j)n}2^{\frac{2in}{q'}}{\, \lesssim\,} \frac{2^{2in/{q'}}2^{i\epsilon}}{{|x|}^{2n+\epsilon}}.
$$
	
Combining the above estimates, we deduce that
$$
|K^i_j(x-ty)|
{ \, \lesssim\,} \|\Omega\|_{L^q(\mathbb{S}^{2n-1})} \frac{2^{\max(0,j)2n/q}}{2^{-i\epsilon}2^{\min(j,0)\epsilon}|x|^{2n+\epsilon}}.
$$
This inequality further implies that
\begin{equation}
|K_j^i(x-y)-K_j^i(x)|\leq C_{n,\epsilon}\|\Omega\|_{L^q}\frac{2^{\max(0,j)2n/q}}{2^{-i\epsilon}2^{\min(j,0)\epsilon}|x|^{2n+\epsilon}}   \label{P100}
\end{equation}

On the other hand
	\begin{align*}
\big	|K_j^i(x-y)-K_j^i(x)|&=\bigg|\int_{\mathbb{R}^{2n}}K^i(z)\int_{0}^{1}2^{-2(i-j)n}(\nabla{\mathcal{F}^{-1}}\beta)(\frac{x-ty-z}{2^{i-j}})\frac{y}{2^{i-j}}\,dt\,dz\bigg|\\
&	\leq \frac{|y|}{2^{i-j}}\int_{0}^{1}\int_{\mathbb{R}^{2n}}\big|K^i(z) \big|\frac{2^{2(j-i)n}}{(1+2^{j-i}|x-ty-z|)^{2n+1}}\,dt\,dz\\
&\leq  \frac{|y|}{2^{i-j}}\int_{0}^{1}(|K^i |  *\Psi_{i-j} )(x-ty) \,dt\\
&\leq     \frac{|y|}{2^{i-j}}  \|\Omega\|_{L^q(\mathbb{S}^{2n-1})} \frac{2^{\max(0,j)2n/q}}{2^{-i\epsilon}2^{\min(j,0)\epsilon}|x|^{2n+\epsilon}}.
		\end{align*}

This estimate, together with inequality \ref{P100}, yields the inequality \ref{P3}
and hence inequality \ref{smooth} holds.
	  
	For the size condition \eqref{size}, we may let $t=0$ in \eqref{P1}. Thus
		\begin{align*}
	\sum\limits_{i\in\mathbb{Z}}	|K^i_j(x)|
&	\leq \|\Omega\|_{L^q(\mathbb{S}^{2n-1})}	\sum\limits_{i\in\mathbb{Z}} \Big(\int_{\frac{1}{2}\leq |{z'}|\leq{2}}\frac{1}{(1+2^j|\frac{x}{2^i}-z'|)^{(2n+\epsilon)q'}}dz'\Big)^{\frac{1}{q'}} 2^{-2(i-j)n}\\
&	{ \, \lesssim\,} \|\Omega\|_{L^q(\mathbb{S}^{2n-1})}\sum\limits_{i<\widetilde{N}^{\ast}} 2^{-2(i-j)n}\Big(\int_{\frac{1}{2}\leq |{z'}|\leq{2}}\frac{1}{\big(1+2^j|\frac{x}{2^i}-z'|\big)^{(2n+\epsilon)q'}}dz'\Big)^{\frac{1}{q'}}\nonumber \\
&\quad+\|\Omega\|_{L^q }\sum\limits_{i>\widetilde{N}^{\ast}} 2^{-2(i-j)n}\nonumber\\
& { \, \lesssim\,}  \|\Omega\|_{L^q(\mathbb{S}^{2n-1}) }\frac{1}{|x|^{2n}}+\|\Omega\|_{L^q(\mathbb{S}^{2n-1}) } \frac{2^{\max(0,j)2n/q}}{2^{\min(j,0)\epsilon}|x|^{2n+\epsilon}} \sum\limits_{i<\widetilde{N}^{\ast}}2^{i\epsilon}\nonumber\\
&{ \, \lesssim\,}  \|\Omega\|_{L^q(\mathbb{S}^{2n-1}) } \frac{2^{\max(0,j)(2n/q+\epsilon)}}{|x|^{2n}},
		\end{align*}
where $\widetilde{N}^*$ is the number such that $2^{\widetilde{N}^*}\approx 2^{\min(j,j/q')}|x| $. 

Therefore,  we know that $K_j$ is a bilinear Calder\'{o}n-Zygmund kernel with bound $ C_{n,\epsilon}\|\Omega\|_{L^q}2^{\max(0,j)(\epsilon+2n/q)}$. The proof of this lemma is finished.
\end{proof}

\section{the proof of Theorem \ref{thm1}}\label{sec4}
We begin by stating a   known result. 

\begin{proposition} [\cite{Grafakos2018}]\label{p1}
	Let  $1\leq p_1,p_2<\infty$ and  $1/p=1/{p_1}+1/{p_2}$. Let $\Omega$ be in ${L^{q}(\mathbb{S}^{2n-1})}$ with  $1<q\leq \infty$ and let  $\delta \in(0,1/{q'})$. Let  $T_j$ be the bilinear  Calder\'on-Zygmund  operator with kernel $K_j$. Them, for $j\leq 0$, the operator $T_j$ maps $L^{p_1}(\mathbb{R}^n)\times L^{p_2}(\mathbb{R}{^n})$ to $L^p(\mathbb{R}{^n})$
	with norm $C\|\Omega\|_{L^{q}}2^{-|j|(1-\delta )}$.
\end{proposition}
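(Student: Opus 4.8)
The plan is to use the fact that, for $j\le 0$, the kernel $K_j$ is much smoother than for $j>0$: cutting $\beta_iK$ down to frequencies $\lesssim 2^{-i}$ can only help, because of the vanishing integral of $\Omega$. Concretely, I would work on the Fourier side. Writing $K_j=\sum_{i\in\mathbb{Z}}\Delta_{j-i}(\beta_iK)$, the piece $\widehat{\Delta_{j-i}(\beta_iK)}(\zeta)=\beta(2^{i-j}\zeta)\widehat{\beta_iK}(\zeta)$ is supported in the annulus $\{|\zeta|\sim 2^{j-i}\}$. From $\int_{\mathbb{S}^{2n-1}}\Omega\,d\sigma=0$ one gets $\int_{\mathbb{R}^{2n}}\beta_iK=0$, hence $\widehat{\beta_iK}(0)=0$, and the crude bounds $\|(u,v)^\gamma\beta_iK\|_{L^1(\mathbb{R}^{2n})}\lesssim 2^{i|\gamma|}\|\Omega\|_{L^1(\mathbb{S}^{2n-1})}$ give $|\widehat{\beta_iK}(\zeta)|\lesssim 2^i|\zeta|\,\|\Omega\|_{L^1}$ together with $|\partial^\gamma\widehat{\beta_iK}(\zeta)|\lesssim 2^{i|\gamma|}\|\Omega\|_{L^1}$ for $|\gamma|\ge1$. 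On the support $|\zeta|\sim 2^{j-i}$ one has $2^i|\zeta|\sim 2^j$ and $2^i\sim 2^j|\zeta|^{-1}$, so, since $j\le 0$ makes $2^{j|\gamma|}\le 2^j$ for $|\gamma|\ge1$, the Leibniz rule yields
$$
\big|\partial^\alpha\big(\beta(2^{i-j}\zeta)\,\widehat{\beta_iK}(\zeta)\big)\big|\ \lesssim\ 2^{j}\,|\zeta|^{-|\alpha|}\,\|\Omega\|_{L^1(\mathbb{S}^{2n-1})}
$$
for every multi-index $\alpha$ up to any prescribed order. Since the annuli $\{|\zeta|\sim 2^{j-i}\}$, $i\in\mathbb{Z}$, have bounded overlap, summing in $i$ and using $\|\Omega\|_{L^1}\lesssim\|\Omega\|_{L^q}$ shows that $\widehat{K_j}$ is a bilinear Coifman--Meyer/H\"ormander multiplier with constant $\lesssim 2^{-|j|}\|\Omega\|_{L^q}$.

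From here I would invoke known theory. The bilinear Coifman--Meyer multiplier theorem gives $\|T_j\|_{L^{p_1}\times L^{p_2}\to L^p}\lesssim 2^{-|j|}\|\Omega\|_{L^q}$ for all $1<p_1,p_2<\infty$. The multiplier estimate above also exhibits $K_j$ as a bilinear Calder\'on--Zygmund kernel with constant $\lesssim 2^{-|j|}\|\Omega\|_{L^q}$ (sharper than, and consistent with, Lemma~\ref{lk} when $j\le 0$), so bilinear Calder\'on--Zygmund theory provides in addition the weak endpoint bounds $L^1\times L^{p_2}\to L^{p,\infty}$ and $L^1\times L^1\to L^{1/2,\infty}$ with the same constant. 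Interpolating these endpoints by multilinear Marcinkiewicz interpolation produces $\|T_j\|_{L^{p_1}\times L^{p_2}\to L^p}\lesssim 2^{-|j|(1-\delta)}\|\Omega\|_{L^q}$ on the full range $1\le p_1,p_2<\infty$, the $\delta\in(0,1/q')$ being the (arbitrarily small) interpolation margin that absorbs the loss at the $L^1$-endpoints.

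The main obstacle is the first step: one has to be sure that summing over the scales $i$ and taking derivatives does not destroy the cancellation gain $2^{j}$. This is precisely where $j\le 0$ is used — a derivative falling on $\widehat{\beta_iK}$ instead of on the frequency cutoff costs $2^{i|\gamma|}\sim|\zeta|^{-|\gamma|}2^{j|\gamma|}$ on the pertinent annulus, and $2^{j|\gamma|}\le 2^j$ holds exactly because $j\le 0$; for $j>0$ this fails and a genuinely different argument, exploiting the $L^q$-smoothness of $\Omega$, is required (this is the companion estimate for positive $j$, which is more delicate). Reaching the endpoints $p_i=1$ is a secondary, routine point, handled through the Calder\'on--Zygmund structure together with the harmless loss $\delta$.
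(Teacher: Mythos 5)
Your main computation is sound, and for $j\le 0$ it actually gives more than the quoted bound in the open range of exponents: the vanishing integral of $\Omega$ does give $\widehat{\beta_iK}(0)=0$ (since $\beta_i$ is radial), the estimates $|\widehat{\beta_iK}(\zeta)|\lesssim 2^i|\zeta|\,\|\Omega\|_{L^1}$ and $|\partial^{\gamma}\widehat{\beta_iK}(\zeta)|\lesssim 2^{i|\gamma|}\|\Omega\|_{L^1}$ are correct, and your Leibniz bookkeeping on the annulus $|\zeta|\sim 2^{j-i}$ is exactly the place where $j\le 0$ is used; summing the boundedly overlapping annuli yields Mikhlin--H\"ormander bounds of every order for $\widehat{K_j}$ with constant $\lesssim 2^{-|j|}\|\Omega\|_{L^1}\lesssim 2^{-|j|}\|\Omega\|_{L^q}$, and the bilinear Coifman--Meyer theorem (plus the induced Calder\'on--Zygmund kernel bounds) then gives $\|T_j\|_{L^{p_1}\times L^{p_2}\to L^p}\lesssim 2^{-|j|}\|\Omega\|_{L^q}$ for all $1<p_1,p_2<\infty$, $1/p=1/p_1+1/p_2$, which is stronger than $2^{-|j|(1-\delta)}$ and makes the restriction $\delta<1/q'$ irrelevant. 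Note that the present paper does not prove Proposition~\ref{p1} at all: it quotes it from \cite{Grafakos2018}, where the low-frequency case is handled in the same Fourier-transform spirit (going back to the Duoandikoetxea--Rubio de Francia scheme), and in this paper the proposition is only invoked with $p_1=p_2=2$, $p=1$, so your argument covers everything that is actually needed.

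The genuine gap is your treatment of the endpoints $p_1=1$ or $p_2=1$, which the literal statement ($1\le p_1,p_2<\infty$) includes. Bilinear Calder\'on--Zygmund theory supplies only the weak-type bounds $L^1\times L^{p_2}\to L^{p,\infty}$ and $L^1\times L^1\to L^{1/2,\infty}$, and multilinear Marcinkiewicz interpolation between these and your interior strong bounds produces strong type only strictly inside the range; it can never return strong type at $p_i=1$ itself. The factor $2^{|j|\delta}$ is a loss in the exponent of the operator norm, not a mechanism that upgrades weak type to strong type at an endpoint, so the sentence about $\delta$ ``absorbing the loss at the $L^1$-endpoints'' does not prove the $p_i=1$ cases (strong type there is in any case doubtful for bilinear Calder\'on--Zygmund operators, and the parameter $\delta\in(0,1/q')$ in the quoted statement has a different origin in \cite{Grafakos2018}). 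If you state your conclusion for $1<p_1,p_2<\infty$, or with weak type when some $p_i=1$, the proof is complete and suffices for the application in Step 2 of the proof of Theorem~\ref{thm1}.
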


The following lemma will be crucial in dealing with the adjoints of $T_\Omega$. The ingredients of its proof are
contained in some known works but the precise statement below may not have appeared in the literature. 

\begin{lemma} \label{Lemma-last}
Let  $1 \le  q <4$, $\delta>0$, and let $b$ be a smooth function on $\mathbb R^{2n}$ which satisfies:
\begin{enumerate}
\item[(a)] $ \| b\|_{L^{q}(\mathbb R^{2n})} \le C_*$,
\item[(b)] $ |b(\xi,\eta) | \le C_* \min(|(\xi,\eta)|, |(\xi,\eta)|^{-\delta})    $,
\item[(c)] $ |\p^{\alpha} b(\xi,\eta) | \le C_\alpha C_* \min(1, |(\xi,\eta)|^{-\delta})    $.
\end{enumerate}
Let $\beta$ be a smooth function supported in an annulus in $\mathbb R^{2n}$ and let $\beta _j(y,z)=\beta \big (2^{-j} (y,z) \big) $ for $j\in \mathbb Z$. Then the multiplier 
$$
b_j(\xi,\eta) = \sum_{i\in \mathbb Z} \beta_{j-i} (\xi,\eta) b (2^i (\xi,\eta)) 
$$
satisfies
$$
\|T_{b_j} \|_{L^2\times L^2\to L^1} \lesssim j \, C_*\, 2^{-\delta j (1-\frac q4)}   .
$$
\end{lemma}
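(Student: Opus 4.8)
The plan is to work in the bilinear‑multiplier picture, resolve $b_j$ into its dyadic frequency constituents, and control each one as a bilinear Fourier multiplier by the $L^2\times L^2\to L^1$ machinery that underlies Theorem~A. Throughout we assume $j\ge1$ (this is the only range in which the stated bound is of interest). First unwind the definition: $b_j=\sum_{i\in\mathbb Z}m_i$ with $m_i(\xi,\eta)=\beta_{j-i}(\xi,\eta)\,b\big(2^i(\xi,\eta)\big)$ supported in the annulus $\{|(\xi,\eta)|\sim 2^{j-i}\}$, so that $T_{b_j}=\sum_i T_{m_i}$ and the $m_i$ have essentially pairwise disjoint supports. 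Since $|2^i(\xi,\eta)|\sim 2^j$ on the support of $m_i$, hypotheses (b) and (c) give $\|m_i\|_{L^\infty}\lesssim C_*2^{-\delta j}$ together with comparable bounds for the derivatives $\partial^\alpha m_i$, while a rescaling in (a) gives $\|m_i\|_{L^q(\mathbb R^{2n})}\lesssim 2^{-2ni/q}C_*$; interpolating these bounds (or, for $q>2$, using Hölder on the support) controls $\|m_i\|_{L^2}=\|m_i^\vee\|_{L^2}$ by expressions carrying both a power of $2^{-\delta j}$ and a power of $2^{\pm n(j-i)}$.

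The analytic engine is the elementary bilinear estimate obtained by writing $T_m(f,g)(x)=\int\!\int m^\vee(y,z)f(x-y)g(x-z)\,dy\,dz$, cutting the $(y,z)$‑integration into dyadic shells $\{|(y,z)|\sim 2^l\}$, applying Hölder in $(y,z)$ on each shell, and then using either Fubini together with Cauchy--Schwarz in $x$ (at the $L^2$ endpoint) or the $L^2$‑boundedness of $M_s$ for $s<2$: this produces an estimate of the form $\|T_m(f,g)\|_{L^1}\lesssim\big(\sum_l 2^{2nl/s}\,\|m^\vee\|_{L^{s'}(\{|(y,z)|\sim 2^l\})}\big)\|f\|_{L^2}\|g\|_{L^2}$, the case $s=s'=2$ giving $\|T_m\|_{L^2\times L^2\to L^1}\lesssim 2^{nl}\|m^\vee\|_{L^2}$ when $m^\vee$ is, up to rapidly decaying tails, concentrated at spatial scale $2^l$. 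Applying this to $m=m_i$, one must keep track of the two competing spatial scales of $m_i^\vee$ — the scale $2^{i-j}$ forced by the frequency support and the coarser scale $2^i$ allowed by the merely unit‑scale smoothness of $b$ in (c) — and combine it with the crude bound $\|T_{m_i}\|_{L^2\times L^2\to L^1}\lesssim\|m_i\|_{L^\infty}\lesssim C_*2^{-\delta j}$ coming from a cube decomposition. Interpolating these, with interpolation exponent governed by $q$ through the splitting $\|m_i\|_{L^2}\lesssim\|m_i\|_{L^q}^{\theta}\|m_i\|_{L^\infty}^{1-\theta}$ and the annulus geometry, and then summing over $i$, should give the claimed bound: the $O(j)$ scales $i$ near the critical one are added directly and produce the factor $j\,C_*\,2^{-\delta j(1-q/4)}$ — the exponent $1-q/4$, rather than the naive $1-q/2$, being the imprint of the $L^2\times L^2$ structure (i.e.\ of $2\cdot2=4$) on the interpolation — while the remaining $i$ are handled by geometric tails, controlled by the smoothness of $b$ in (c) for $i\to+\infty$ and by the decay of $b$ at infinity in (b) for $i\to-\infty$.

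The step I expect to be the main obstacle is precisely the organization of the sum over $i\in\mathbb Z$: each individual $T_{m_i}$ obeys only an $i$‑uniform bound of size $\sim C_*2^{-\delta j}$, so the convergence of $\sum_i T_{m_i}$ genuinely exploits the near‑orthogonality of the frequency supports of the $m_i$ (hence a Littlewood--Paley / square‑function argument on the two inputs) and the fact that each $m_i^\vee$ is a single Littlewood--Paley block of a rescaled copy of $b^\vee$. Extracting from this the exact logarithmic loss — the factor $j$ — and the exact exponent $\delta j(1-q/4)$, rather than a cruder $2^{-cj}$ with $c$ unspecified, is the delicate point; the remaining ingredients (Hausdorff--Young, the maximal‑function bound, the cube decomposition, and the interpolation) are routine.
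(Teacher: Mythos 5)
Your outline does not close, and the place where it breaks is exactly the place where the paper leans on outside machinery rather than elementary estimates. The first concrete problem is the ``crude bound'' $\|T_{m_i}\|_{L^2\times L^2\to L^1}\lesssim \|m_i\|_{L^\infty}$ that you propose to get from a cube decomposition and then interpolate against: no such bound holds with a constant independent of the frequency scale. There is no Plancherel argument available when the target is $L^1$, and decomposing the annulus $|(\xi,\eta)|\sim 2^{j-i}$ (after rescaling, radius $2^{j}$) into unit cubes produces $\sim 2^{2nj}$ pieces whose outputs have no orthogonality in $L^1$, so the constant degenerates by a power of $2^{j}$. If a sup-norm bound of this kind were true, the hypothesis $b\in L^q$ with $q<4$ --- which is the whole point of the lemma and the source of the exponent $1-\frac q4$ --- would be superfluous. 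Since your interpolation scheme pivots on this bound, the exponent $1-\frac q4$ is asserted rather than derived. A second, related problem is the range $2<q<4$, which you do need (in the application $q=r'$ with $r>4/3$ arbitrary): there $\|m_{j,0}\|_{L^2}\le\|m_{j,0}\|_{L^q}|{\rm supp}\, m_{j,0}|^{\frac12-\frac1q}\lesssim C_*2^{2nj(\frac12-\frac1q)}$ grows exponentially in $j$, so even your single-scale estimate via $\|m^\vee\|_{L^2}$ fails to give any decay; and even for $q\le 2$ the intermediate spatial shells $1\ll 2^l\ll 2^{j}$ carry the factor $|{\rm supp}\, m_{j,0}|^{1/2}\sim 2^{nj}$ through the derivative bounds in (c), which your shell decomposition does not absorb. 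Finally, the summation over $i$ is not a matter of near-orthogonality: a square-function/Cauchy--Schwarz argument in $i$ only applies on the part of $m_i$ where both $|\xi|$ and $|\eta|$ are comparable to $2^{j-i}$; on the region where $\xi+\eta$ is small the outputs occupy all low frequencies and no $L^1$-orthogonality exists, and on the regions where one variable dominates you can place a Littlewood--Paley block on only one input, so the naive sum diverges like the square root of the number of scales.

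For comparison, the paper's proof does not attempt any of this by hand. It sets $b_{j,0}=\beta_j b$, splits $b_j$ into the diagonal part $b_j^1$ and the off-diagonal part $b_j^2$ of the product-wavelet decomposition of \cite{Grafakos2019}, and quotes the counting estimate of \cite[Section 4]{Grafakos2019}: with $C_0$ the maximum of $\|\partial^\alpha b_{j,0}\|_{L^\infty}$ over a fixed number of derivatives (so $C_0\lesssim C_*2^{-\delta j}$ by (c)), one has $\|T_{b_j^1}\|_{L^2\times L^2\to L^1}\lesssim j\,C_0^{1-\frac q4}\|b_{j,0}\|_{L^q}^{\frac q4}\lesssim j\,C_*2^{-\delta j(1-\frac q4)}$, while the off-diagonal part is handled as in \cite[Section 5]{Grafakos2018} and gives $\|T_{b_j^2}\|_{L^2\times L^2\to L^1}\lesssim C_*2^{-\delta j}$. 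Both the logarithmic factor $j$ and the exponent $1-\frac q4$ come from counting wavelet coefficients using hypothesis (a); that counting is precisely the substitute for the unavailable sup-norm operator bound. Unless you import that wavelet criterion (or reprove it), the elementary route you sketch cannot produce the stated estimate.
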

\begin{proof}

Denote $
	b_{j,0}=\beta_{j} (\xi,\eta) b (\xi,\eta) 
	$ and write $b_j=b_j^1+b_j^2$, where $b_j^1$ is the diagonal part of $b_j$ according to the wavelet 
decomposition in \cite[Section 4]{Grafakos2019} and $b_j^2$ is the off-diagonal part.  (In this reference 
$b$ is denoted by $m$, $b_j$ by $m_j$ and $b_{j,0}$ by $m_{j,0}$.)

Let 
$$
		C_0=\max\limits_{|\alpha|\leq \lfloor \frac{2n}{4-q'} \rfloor +1} \|\partial ^{\alpha}b_{j,0}\|_{L^{\infty}} \lesssim C_{*}2^{-\delta j}.
$$
	  By  \cite[Section 4]{Grafakos2019},  we obtain 
	$$
	\|T_{b^1_{j}}\|_{L^2\times L^2\rightarrow L^1}{ \, \lesssim\,} jC_0^{1-\frac{q}{4}}\|b_{j,0}\|_{L^{q}}^{\frac{q}{4}} 
	{ \, \lesssim\,} jC_0^{1-\frac{q}{4}}\|b\|_{L^{q}}^{\frac{q}{4}}   
	{ \, \lesssim\,} j (C_{*}2^{-\delta j})^{1-\frac{q}{4}}\|b\|_{L^{q}}^{\frac{q}{4}}
	{ \, \lesssim\,} j C_{*} (2^{-\delta j})^{1-\frac{q}{4}} .
	$$

A similar estimate (without $j$) holds for the off-diagonal part $T_{b^2_{j}}$   by the same procedure  as in \cite[Section 5]{Grafakos2018}.  It follows that
$$
\|T_{b^2_j}\|_{L^2\times L^2\rightarrow L^1}{ \, \lesssim\,}  2^{-\delta j}\|b_{j,0}\|_{L^q(\mathbb{R}^{2n })} 
{ \, \lesssim\,}  C_* 2^{-\delta j}  .
$$
Combining the estimates for $b_j^1$ and $b_j^2$, we obtain
$$ 
		\|T_{b_{j}}\|_{L^2\times L^2\rightarrow L^1}
		\lesssim j C_* 2^{-\delta j( 1-\frac{q}{4}) }  .  
$$

\end{proof}

We also need the following lemma. 

\begin{lemma}\label{p2}
	Let  $2\leq p_1,p_2\leq \infty$, $1\le p\le 2$, $1/p=1/{p_1}+1/{p_2}$, $\Omega\in{L^{q}(\mathbb{S}^{2n-1})}$. For {$ 
		 j > 0$} we have that
	\begin{eqnarray*}
			\|T_j\|_{ L^{p_1}(\mathbb{R}^n)\times L^{p_2}(\mathbb{R}{^n}) \rightarrow L^p(\mathbb{R}{^n}) }\lesssim
		\begin{cases}
		Cj2^{-j\delta (1-\frac{q'}{4})}\|\Omega\|_{L^{q}(\mathbb{S}^{2n-1})} , &\frac{4}{3}< q\leq 2 , \delta < \frac{1}{q'};\cr C  j 2^{-j\delta \frac{1}{2}}\|\Omega\|_{L^q(\mathbb{S}^{2n-1})} , &q>2, \delta <1/2 .\end{cases}
	\end{eqnarray*}
\end{lemma}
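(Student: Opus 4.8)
\textbf{Proof plan for Lemma \ref{p2}.}
The plan is to estimate $T_j$ for $j>0$ by interpolating between an $L^2\times L^2\to L^1$ bound with geometric decay in $j$ and a crude $L^\infty$-type bound, exactly in the spirit of the arguments of \cite{Grafakos2018, Grafakos2019}. The starting point is that, by Lemma \ref{lk}, $K_j$ is a bilinear Calder\'on--Zygmund kernel with bound $A_j\lesssim \|\Omega\|_{L^q}2^{j(\epsilon+2n/q)}$, so $T_j$ is automatically bounded on every tuple of exponents with a (bad) exponential constant; the content is the good decay. First I would reduce the $L^{p_1}\times L^{p_2}\to L^p$ statement, via bilinear complex interpolation against the trivial bounds coming from the Calder\'on--Zygmund nature of $K_j$ (which controls $L^{p_1}\times L^{p_2}\to L^p$ for the full open range with constant $A_j$), to the single endpoint estimate $\|T_j\|_{L^2\times L^2\to L^1}\lesssim j\,2^{-j\delta(1-q'/4)}\|\Omega\|_{L^q}$ in the range $4/3<q\le 2$, and $\|T_j\|_{L^2\times L^2\to L^1}\lesssim j\,2^{-j\delta/2}\|\Omega\|_{L^q}$ for $q>2$; the loss in passing from the $L^2$ endpoint to general $(p_1,p_2,p)$ with $2\le p_1,p_2\le\infty$ is only a fixed power of $A_j$ raised to a small interpolation parameter, which can be absorbed by shrinking $\delta$ slightly, so it suffices to prove the $L^2\times L^2\to L^1$ bounds.

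Next I would pass to the multiplier side. Writing $\widehat{K_j}=b_j$ where $b_j(\xi,\eta)=\sum_{i\in\mathbb Z}\beta_{j-i}(\xi,\eta)\,b(2^i(\xi,\eta))$ and $b=\widehat K$ is (a constant multiple of) the Fourier transform of the rough kernel $\Omega((u,v)/|(u,v)|)|(u,v)|^{-2n}$, the known estimates for $\widehat{K}$ give precisely hypotheses (a)--(c) of Lemma \ref{Lemma-last}: $b\in L^{q'}$ by Hausdorff--Young applied on $\mathbb S^{2n-1}$ (this is where the exponent $q'$ enters and forces $q'<4$, i.e. $q>4/3$), $|b(\xi,\eta)|\lesssim\min(|(\xi,\eta)|,|(\xi,\eta)|^{-\delta})$ with $\delta<1/q'$ from the standard decay estimate for Fourier transforms of rough homogeneous distributions of degree $-2n$ with mean-zero $\Omega\in L^q$, and the derivative bounds (c) from differentiating that estimate. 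Then Lemma \ref{Lemma-last} with $C_*\approx\|\Omega\|_{L^q}$ and with $q$ there taken to be $q'$ here yields $\|T_{b_j}\|_{L^2\times L^2\to L^1}\lesssim j\,\|\Omega\|_{L^q}\,2^{-\delta j(1-q'/4)}$, which is exactly the first case. For $q>2$ one has $q'<2<4$, and the same Lemma gives decay $2^{-\delta j(1-q'/4)}$; since $q'<2$ we have $1-q'/4>1/2$, so in particular one gets the stated (weaker but cleaner) bound $j\,2^{-j\delta/2}\|\Omega\|_{L^q}$, at the mild cost of replacing $\delta$ by $\min(\delta,1/2)$, which is harmless under the hypothesis $\delta<1/2$.

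The main obstacle, and the step requiring the most care, is verifying that the rough kernel genuinely fits the template of Lemma \ref{Lemma-last}: one must check that the wavelet decomposition $b_j=b_j^1+b_j^2$ and the $L^2\times L^2\to L^1$ machinery of \cite[Section 4--5]{Grafakos2018, Grafakos2019} apply with $C_0=\max_{|\alpha|\le \lfloor 2n/(4-q')\rfloor+1}\|\partial^\alpha b_{j,0}\|_{L^\infty}\lesssim C_* 2^{-\delta j}$, where $b_{j,0}=\beta_j b$; the subtlety is that these references were written for $\Omega\in L^\infty$ or for a generic smooth symbol, and one must track the dependence on $\|\Omega\|_{L^q}$ (equivalently $\|b\|_{L^{q'}}$) through the diagonal and off-diagonal pieces. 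Once Lemma \ref{Lemma-last} is granted this is routine, so the proof reduces to: (i) recording the Fourier transform estimates (a)--(c) for the homogeneous rough kernel; (ii) invoking Lemma \ref{Lemma-last} with the correct parameters; and (iii) interpolating the resulting $L^2$-endpoint bound with the Calder\'on--Zygmund bound of Lemma \ref{lk} to reach the open range $2\le p_1,p_2\le\infty$, $1\le p\le 2$, at the price of an arbitrarily small loss in $\delta$.
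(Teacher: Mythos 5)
Your treatment of the $L^2\times L^2\to L^1$ endpoint matches the paper: set $m_j=\widehat{K_j}$, use Hausdorff--Young to get $\|\widehat{K^0}\|_{L^{q'}}\lesssim\|\Omega\|_{L^q}$ (this is where $q'<4$, i.e.\ $q>4/3$, enters), check hypotheses (b)--(c), and invoke Lemma~\ref{Lemma-last} with $C_*\approx\|\Omega\|_{L^q}$ to obtain $\|T_{m_j}\|_{L^2\times L^2\to L^1}\lesssim j\,2^{-j\delta(1-q'/4)}\|\Omega\|_{L^q}$. But your step (iii) — passing to general $(p_1,p_2,p)$ by interpolating this single decaying bound against the Calder\'on--Zygmund bounds of Lemma~\ref{lk}, "absorbing the loss by shrinking $\delta$" — does not work. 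The CZ constant is $A_j\approx\|\Omega\|_{L^r}2^{j(\gamma+2n/r)}$, which grows exponentially in $j$ with a rate $2n/r$ that dwarfs the available decay $\delta(1-q'/4)<1$. Interpolation gives a constant of the form $\bigl(j2^{-j\delta(1-q'/4)}\bigr)^{1-\theta}A_j^{\theta}$, and $\theta$ is not uniformly small over the claimed range: at the included endpoints $(p_1,p_2)=(\infty,2)$ or $(2,\infty)$ one has $\theta=1$ and no decay at all, and even at the interior point $p_1=p_2=3$ (the one actually used in the proof of Theorem~\ref{thm1}) one finds $\theta=2/3$, so the interpolated constant \emph{grows} like $2^{j[\frac23(\gamma+2n/r)-\frac13\delta(1-q'/4)]}$. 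So your reduction proves neither the lemma as stated nor a version strong enough for its application.

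The paper closes this gap differently: it applies Lemma~\ref{Lemma-last} not only to $m_j$ but also to the two adjoint multipliers $(m_j)^{*1}(\xi_1,\xi_2)=m_j(-(\xi_1+\xi_2),\xi_2)$ and $(m_j)^{*2}(\xi_1,\xi_2)=m_j(\xi_1,-(\xi_1+\xi_2))$, observing that these have the same structure $\sum_i(\beta_{j-i}\circ A^t)\,\widehat{\beta K}(A^t2^i(\cdot))$ with an invertible matrix $A$ for which $|A^t(\xi,\eta)|\approx|(\xi,\eta)|$, so hypotheses (a)--(c) still hold. This yields the same decaying $L^2\times L^2\to L^1$ bound for both adjoints, hence by duality decaying bounds for $T_{m_j}$ at the vertices $L^\infty\times L^2\to L^2$ and $L^2\times L^\infty\to L^2$; interpolating among three \emph{decaying} endpoint estimates gives the uniform rate $j2^{-j\delta(1-q'/4)}$ on the whole closed range $2\le p_1,p_2\le\infty$, $1\le p\le 2$. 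You would need this adjoint/duality step (or an equivalent) to make your plan work. A secondary issue: for $q>2$ you propose to verify hypothesis (a) with exponent $q'<2$ directly, but Hausdorff--Young is unavailable for $q>2$ and $\widehat{K^0}$ need not lie in $L^{q'}$ when $q'<2$; the paper instead uses the embedding $L^q(\mathbb S^{2n-1})\subset L^2(\mathbb S^{2n-1})$ and runs the $q=2$ case, which is why the stated decay there is only $j2^{-j\delta/2}$ with $\delta<1/2$.
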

\begin{proof}
	The techniques of the proof are   borrowed from \cite{Grafakos2019}. 
	Introduce the notation:   
	$$
m = \widehat{K^0}, \quad 	  m_j=\widehat{K_j}, \quad m_{j,0}=\widehat{K^0}\beta _j  ,
	$$
	where $K^0$, $\beta_j$, and $K_j$ are the same as in \eqref{91}, \eqref{92}, and \eqref{93} are associated 
	with the fixed $\Omega $ in $L^q(\mathbb S^{2n-1})$.

We first fix $q$ satisfying $   4/3< q \leq 2$.  
As $q\le 2$,  the Hausdorff-Young inequality yields that
	$$
	\|m \|_{L^{q'}} \leq \|K^0 \|_{L^q} { \, \lesssim\,} 
	\|\Omega\|_{L^q{  (\mathbb{S}^{2n-1})} }.
	$$
Also, it is not too hard to verify that conditions (b) and (c) in Lemma~\ref{Lemma-last} hold (see \cite[Lemma 6.4]{Grafakos2019}) with 
$C_* = \|\Omega \|_{L^q(\mathbb S^{2n-1})}$  and $\delta<1/q'$.  Applying  Lemma \ref{Lemma-last}   we obtain 
	$$ 
		\|T_{m_{j}}\|_{L^2\times L^2\rightarrow L^1}
		\lesssim j 2^{-\delta j( 1-\frac{q'}{4}) }\|\Omega\|_{L^q(\mathbb{S}^{2n-1})} .
	$$

Now let 
$$
 {(m_j)^{*1}}(\xi_1,\xi_2)=m_j (-(\xi_1+\xi_2),\xi_2) , \quad 
  {(m_j)^{*2}}=m_j (\xi_1,-(\xi_1+\xi_2))
 $$
  be the two adjoint  multipliers associated with   $m_j$. Then 
  we have 
  $$
  (m_j)^{*1} =\sum\limits_{i} { (\beta _{j-i}  \circ A^t } ) \, ( \widehat{\beta _i  K }\circ A^t )
  =\sum\limits_{i} { (\beta _{j-i}  \circ A^t } ) \,  \widehat{\beta   K }  ( A^t 2^i ( \cdot)  )
  $$
  where  $A= \begin{pmatrix} -I_n & -I_n \\ 0 & I_n \end{pmatrix}$, and $I_n$ is 
  the $n\times n$ identity matrix. 
  
 We now notice that the function $b(\xi,\eta) = \widehat{\beta   K }  ( A^t(\xi,\eta) ) $ satisfies the 
 hypotheses of Lemma~\ref{Lemma-last} as $A^t(\xi,\eta)$ has the same size as $(\xi,\eta)$. 
(Here $(\xi,\eta)$ is thought of as a column vector.)
 The same argument works for the other adjoint of $m_j$ with the matrix 
 $  \begin{pmatrix} I_n & 0 \\ -I_n & -I_n \end{pmatrix}$ in place of $A$.  
  It follows that 
$$
\|T_{({m_{j})}^{*1}}\|_{L^2\times L^2\rightarrow L^1} {  + }\|T_{({m_{j})}^{*2}}\|_{L^2\times L^2\rightarrow L^1}{ \, \lesssim\,}  j 2^{-j\delta(1-\frac{q'}{4})}\|\Omega\|_{L^q(\mathbb{S}^{2n-1})} .
$$ 
By duality, we have
$$
\|T_{{m_{j}}}\|_{L^{\infty}\times L^2\rightarrow L^2} {  + } \|T_{{m_{j}}}\|_{L^2\times L^{\infty}\rightarrow L^2}{ \, \lesssim\,}  j 2^{-j\delta(1-\frac{q'}{4})}\|\Omega\|_{L^q(\mathbb{S}^{2n-1})} .
$$ 
For $4/3<q\leq2$, interpolating between the above two estimates implies that  
$$
\|T_{m_j}\|_{L^{p_1}\times L^{p_2}\rightarrow L^p}{ \, \lesssim\,}  j 2^{-j\delta(1-\frac{q'}{4})}\|\Omega\|_{L^q(\mathbb{S}^{2n-1})} 
, \quad \delta <\frac{1}{q'},
$$
where $2\leq p_1,p_2\leq \infty$, $1\leq p\leq 2$ and $\frac{1}{p_1}+\frac{1}{p_2}=\frac{1}{p}$. 

Now for $q>2$, thanks to the embedding $L^{q}(\mathbb{S}^{2n-1}) \subseteq  L^{2}(\mathbb{S}^{2n-1}) $, we have
$$
\|T_{m_j}\|_{L^{p_1}\times L^{p_2}\rightarrow L^p}{ \, \lesssim\,}  j 2^{-j\delta(1-\frac{2}{4})}\|\Omega\|_{L^2(\mathbb{S}^{2n-1})}{ \, \lesssim\,}  j 2^{-j\delta \frac{1}{2}}\|\Omega\|_{L^q(\mathbb{S}^{2n-1})} ,\quad 
\delta <\frac{1}{2},
$$
where $2\leq p_1,p_2\leq \infty$, $1\leq p\leq 2$ and $\frac{1}{p_1}+\frac{1}{p_2}=\frac{1}{p}$. 

This completes the proof of this lemma. 
\end{proof}

We are now in the position to prove  Theorem \ref{thm1}. 
\begin{proof}[Proof of Theorem \ref{thm1}] 
{	
By Littlewood-Paley decomposition of the kernel,  $T_{\Omega}$ can be written as 
	$$T_{\Omega}(f,g)(x)=
\sum\limits_{j=-\infty}^{\infty}\int_{\mathbb{R}^n}\int_{\mathbb{R}^n}|K_j(x-y,x-z) f(y)g(z)\,dydz:=\sum\limits_{j=-\infty}^{\infty}T_{j}(f,g)(x).$$ 

Given a stopping collection  $\mathcal {Q}$   with top cube $Q$,  let $\mathcal Q_j$ be defined as 
\begin{align*}
\mathcal{Q}_{j,t_1}^{t_2}(f_1,f_2,f_3) =\frac{1}{|Q|}\Big[\langle T[K_j]_{t_1}^{t_2\wedge{s_Q}}(f_1\cic{1}_{Q},f_2),f_3 \rangle -\sum_{\substack{ L\in\mathcal{Q}\\ L\subset Q}}\langle T[K_j]_{t_1}^{t_2\wedge{s_L}}(f_1\cic{1}_{Q},f_2),f_3 \rangle \Big].
\end{align*}
For the sake of simplicity, let's denote
$\mathcal{Q}_j(f_1,f_2,f_3)=\mathcal{Q}_{j,t_1}^{t_2}(f_1,f_2,f_3)$.	
\medskip

   Our proof  will be divided into two parts $\sum_{j>0}T_{j}$ and $\sum_{j\leq 0}T_{j}$.  Each part  should satisfy the  assumption \eqref{lem1} of Lemma \ref{thm3}. We therefore consider these two parts into two steps.
	 }
	
 {\bf{Step 1. Estimate for $j >0$}}.
 
 	{
 			Fix $0<\gamma<1$,  by Lemma \ref{lk}, $T_j$ 	is a bilinear Calder\'{o}n-Zygmund operator with kernel $K_j$,
 			and the size and smoothness conditions constant $A_j\leq C_{n,\gamma}\|\Omega\|_{L^r}2^{j(\gamma+2n/r)}$.  
 			
 			\smallskip
 		Combining the methods in  \cite[Section 3] {Barron2017}, we know the kernel of $T_j$ satisfies
 		$\|[K_j]\|_{p}\lesssim2^{j(\epsilon+2n/q)} <\infty$ for fixed $j\in\mathbb{Z}$.
 		This enables us to use  Lemma 3.1 and Proposition 3.3 in \cite{Barron2017} with $A_j\leq C_{n,\epsilon}\|\Omega\|_{L^r}2^{ j(\gamma+2n/r)}$ (Then choose $\beta=1$ and $p=1$).  }
	Hence 	
	$$
	|\mathcal{Q}_j(f_1,f_2,f_3)| { \, \lesssim\,}  \|\Omega\|_{L^r}2^{j(\gamma+2n/r)}|Q| \|f_1\|_{\dot{\mathcal X}_1}\|f_2\|_{\mathcal Y_1}\|f_3\|_{\mathcal Y_1} .
	$$
By Lemma \ref{p2}, choosing $p_1=p_2=3$, we have 
$$|\mathcal{Q}_j(f_1,f_2,f_3)| { \, \lesssim\,}  \|\Omega\|_{L^r}j2^{-cj}|Q| \|f_1\|_{\dot{\mathcal X}_3}\|f_2\|_{\mathcal Y_3}\|f_3\|_{\mathcal Y_3},$$
where $c<{1}/{r'} (1-{r'}/{4})$, if $4/3<r \leq 2$ and  $c<1/4$ if $r>2$. 

Interpolating via Lemma \ref{4}, it follows that  for any $0<\epsilon<1$ there  exits $q=1+2\epsilon$ so that 
\begin{align*}
|\mathcal{Q}_j(f_1,f_2,f_3)|&{ \, \lesssim\,}  \|\Omega\|_{L^r} 2^{j(\gamma+2n/r)(1-\epsilon)}j^\epsilon
2^{-cj\epsilon}|Q| \|f_1\|_{\dot{\mathcal X}_q}\|f_2\|_{\mathcal Y_q}\|f_3\|_{\mathcal Y_q}\\
&{ \, \lesssim\,}  j2^{-j\gamma\epsilon} 2^{j(\gamma+2n/r)}2^{-(c+2n/r)j\epsilon} \|\Omega\|_{L^r}|Q| \|f_1\|_{\dot{\mathcal X}_q}\|f_2\|_{\mathcal Y_q}\|f_3\|_{\mathcal Y_q}.
\end{align*}
If we choose $\gamma < c$	and $\epsilon=\frac{2n/r+\gamma}{2n/r+c}$, then $0<\epsilon<1$.
Therefore
$$|\mathcal{Q}_j(f_1,f_2,f_3)|{ \, \lesssim\,} j2^{-j\gamma\epsilon} |Q|  \|\Omega\|_{L^r}\|f_1\|_{\dot{\mathcal X}_q}\|f_2\|_{\mathcal Y_q}\|f_3\|_{\mathcal Y_q}.$$
Summing over $j\in \mathbb{Z}^+$, we can conclude that for $q=1+ 2\frac{2n/r+\gamma}{2n/r+c}$
$$|\mathcal{Q}(f_1,f_2,f_3)|{ \, \lesssim\,} |Q|  \|\Omega\|_{L^r}\|f_1\|_{\dot{\mathcal X}_q}\|f_2\|_{\mathcal Y_q}\|f_3\|_{\mathcal Y_q}. $$
By symmetry, it also yields that
\begin{align*}
&|\mathcal{Q}(f_1,f_2,f_3)|{ \, \lesssim\,} |Q|  \|\Omega\|_{L^r}\|f_1\|_{{\mathcal Y}_q}\|f_2\|_{\dot{\mathcal X_q}}\|f_3\|_{\mathcal Y_q}, \\
&|\mathcal{Q}(f_1,f_2,f_3)|{ \, \lesssim\,} |Q|  \|\Omega\|_{L^r}\|f_1\|_{{\mathcal Y}_q}\|f_2\|_{\mathcal Y_q}\|f_3\|_{\dot{\mathcal X_q} }.
\end{align*}

{\bf{Step 2. Estimate for $j \le  0$}}.

By Lemma \ref{lk}, 
$T_j$ 	is a bilinear Calder\'{o}n-Zygmund kernel with constant $A_j\leq \|\Omega\|_{L^r}$. Hence
$$
|\mathcal{Q}_j(f_1,f_2,f_3)|{ \, \lesssim\,}  \|\Omega\|_{L^r}|Q| \|f_1\|_{\dot{\mathcal X}_1}\|f_2\|_{\mathcal Y_1}\|f_3\|_{\mathcal Y_1}.
$$

By Proposition \ref{p1} with $p_1=p_2=2$, we have 
$$
|\mathcal{Q}_j(f_1,f_2,f_3)|{ \, \lesssim\,}  \|\Omega\|_{L^r}2^{-c|j|}|Q| \|f_1\|_{\dot{\mathcal X}_2}\|f_2\|_{\mathcal Y_2}\|f_3\|_{\mathcal Y_{\infty}},
$$
where $c=1- \delta $, $\delta<1/{q'}$. For any $q>1$,  by Lemma 4.3 and Lemma 4.4 in \cite{Barron2017}, then summing over $j\leq 0$, one obtains
$$
|\mathcal{Q}(f_1,f_2,f_3)|{ \, \lesssim\,} |Q|  \|\Omega\|_{L^r}\|f_1\|_{\dot{\mathcal X}_q}\|f_2\|_{\mathcal Y_q}\|f_3\|_{\mathcal Y_q} .
$$
$$
|\mathcal{Q}(f_1,f_2,f_3)|{ \, \lesssim\,} |Q|  \|\Omega\|_{L^r}\|f_1\|_{{\mathcal Y}_q}\|f_2\|_{\dot{\mathcal X_q}}\|f_3\|_{\mathcal Y_q} .
$$
$$
|\mathcal{Q}(f_1,f_2,f_3)|{ \, \lesssim\,} |Q|  \|\Omega\|_{L^r}\|f_1\|_{{\mathcal Y}_q}\|f_2\|_{\mathcal Y_q}\|f_3\|_{\dot{\mathcal X_q} }.
$$
In conclusion, the above two steps hold for 
$$	p>	\begin{cases}
		\frac{24n+3r-4}{8n+3r-4}, & \frac{4}{3}< r \leq 2 ;
	\cr	\frac{24n+r}{8n+r}, & r>2 .
			\end{cases}$$
			since the norm of $\mathcal{Y}_q$ is increasing over $q$.
			
{ Using Theorem A, we can find $r_1$, $r_2$ in $ [2,\infty]$ and $\alpha$ in $[1,2]$ such 
$T_{\Omega}$ maps $L^{r_1}\times L^{r_2}$ to $L^\alpha$. But a smooth truncation of the kernel $K(u,v)$ also 
gives rise to an operator with a similar bound (see Remark~\ref{Rnew}), thus we have that $C_T(r_1,r_2,\alpha)<\infty$ and \eqref{1} is valid.}
Hence,  $T_{\Omega}$ satisfies Lemma \ref{thm3}. Moreover, we can choose  $c<\frac{1}{r'}(1-\frac{r'}{4})$ if $\frac{4}{3}< r \leq 2$, and $c<\frac{1}{4}$ if $r>2$, such that $p>3- \frac{2c}{2n/r+c}$. Then
$$
|\mathcal{Q}(f_1,f_2,f_3)|{  \, \lesssim\,} \|\Omega\|_{L^r}\sup\limits_{\mathcal S}\mathsf{PSF}_{\mathcal S,\vec{p}}(f_1, f_2,f_{3}),
$$
 this finishes the proof of Theorem \ref{thm1}, since the multiplication operators regarding  the remaining   truncations satisfy the required   $\mathsf{PSF}_{\mathcal S}^{(1,1,1)}$  bound \cite[Section 6.2]{Barron2017}. 
\end{proof}
\section{derivation of the Corollaries}\label{sec5}

\begin{proof}[Proof of Corollary \ref{collary2}]
		The techniques  are   borrowed from \cite{Culiuc2018}, but  the weight classes  are different.
		
Define $\sigma=v_{\vec{w}}^{-\frac{q'}{q}}$ and choose $p_i> \max\{\frac{24n+3r-4}{8n+3r-4}, \frac{24n+r}{8n+r}\}$, with $p_i <q_i$, $i=1,2$ and $p'_3>q$.
By Theorem \ref{thm1} and duality, for any sparse collection $\mathcal S$, it is enough to show that
\begin{equation}\label{5.1.Zhidan}
\mathsf{PSF}_{\mathcal  S}^{(p_1,p_2,p_3)}(f_1,f_2,f_3)\lesssim \prod\limits_{i=1}^2\|f_i\|_{L^{q_i}(v_i)}\|f_3\|_{L^{q'}(\sigma) }
\end{equation}
with bounds independent of $\mathcal S$.
 
Let 
$$
w_1=v_1^{\frac{p_1}{p_1-q_1}} , \quad w_2=v_2^{\frac{p_2}{p_2-q_2}}, \quad w_3=\sigma ^{\frac{p_3}{p_3-q'}}
$$
and $f_i=g_i w_i^{\frac{1}{p_i}}$, $i=1,2,3$. Then we have 
$$
\|f_i\|_{L^{q_i}(v_i)}=\|g_i\|_{L^{q_i}(w_i)} , \qquad i=1,2 ,
$$
 and 
 $$
 \|f_3\|_{L^{q'}(\sigma)}=\|g_3\|_{L^{q'}(w_3)} . 
 $$
Let $q_3=q'$. It follows that
\begin{align*}
&  \mathsf{PSF}_{\mathcal  S}^{(p_1,p_2,p_3)}(f_1,f_2,f_3)\\
&=   \mathsf{PSF}_{\mathcal  S}^{(p_1,p_2,p_3)}\big(g_1w_1^{\frac{1}{p_1}},g_2w_2^{\frac{1}{p_2}},g_3w_3^{\frac{1}{p_3}}\big)\\
&=\sum\limits_{Q\in {\mathcal  S}} \Big(\prod\limits_{j=1}^3 w_j(E_Q)^{\frac{1}{q_j}}\Big(\frac{\langle g_j^{p_j}w_j \rangle _Q}{\langle w_j \rangle _Q}\Big)^{\frac{1}{p_j}}\Big)\times \Big(\prod\limits_{j=1}^3{ \big(  \langle w_j \rangle _Q}\big)^{\frac{1}{p_j}-\frac{1}{q_j}}\Big)
\times \Big(|Q|\prod\limits_{j=1}^3\Big(\frac{{\langle w_j \rangle_Q}}{w_j(E_Q)}\Big)^{\frac{1}{q_j}}\Big).
\end{align*}
By a simple calculation, we have 
$$
\prod\limits_{j=1}^2{\langle w_j \rangle _Q}^{\frac{1}{p_j}-\frac{1}{q_j}}{\langle w_3 \rangle _Q}^{\frac{1}{p_3}-\frac{1}{q'}}=
 \prod\limits_{j=1}^2{\langle w_j \rangle_Q}^{\frac{1}{p_j}-\frac{1}{q_j}}{\langle v_{\vec{w}}^{\frac{p'_3}{p'_3-q}}\rangle _Q^{\frac{1}{q}-\frac{1}{p'_3}}}=[\vec{v}]_{A_{\vec{q},\vec{p}}}.
 $$
We now   deal with the second product   using the technique in \cite{Lerner2019}. Let 
$$
x_1=\frac{p_1-q_1}{p_1q_1},\quad x_2=\frac{p_2-q_2}{p_2q_2} , \quad x_3=\frac{p_3-q'}{p_3q'} ,
$$
 then 
 $$
 w_1^{-\frac { x_1} 2}w_2^{-\frac { x_2}2}w_3^{-\frac { x_3}2}=1 . 
 $$
    H\"{o}lder's inequality and the fact that 
$$
-\frac{x_1}{2}-\frac{x_2}{2}-\frac{x_3}{2}+\frac{1}{2p'_1}+\frac{1}{2p'_2}+\frac{1}{2p'_3}=1
$$
 imply that   
$$
\prod\limits_{i=1}^3\big(w_i(E_Q)\big)^{-\frac {x_i}2}E_Q^{\frac{1}{2p'_i}}\geqslant\int_{E_Q}\prod\limits_{i=1}^3w_i^{-\frac {x_i}2}=|E_Q|.
$$
The sparseness of $\mathcal S$ yields that
$$
\prod\limits_{i=1}^3\Big(\frac{w_i(E_Q)}{|Q|}\Big)^{-\frac {x_i}2}\geq \eta ^{-\frac{ x_1}2-\frac{x_2}2-\frac{x_3}2} . 
$$
Therefore
$$
\prod\limits_{i=1}^3\Big(\frac{{\langle w_i \rangle _Q}}{\frac{1}{|Q|}w_i(E_Q)}\Big)^{ -\frac{x_i}2 }\leq 
\eta^{\frac{x_1}2+\frac{x_2}2+\frac{x_3} 2}\prod\limits_{i=1}^3{{\langle w_i\rangle _Q}}^{-\frac{x_i}2} .
$$
By Definition\ref{def1}, we have
$$\begin{aligned}
\prod\limits_{i=1}^3\Big(\frac{{\langle w_i \rangle _Q}}{\frac{1}{|Q|}w_i(E_Q)}\Big)^{\frac{1}{q_i}}&\leq \big(\eta^{{x_1+x_2+x_3}}\prod\limits_{i=1}^3{{\langle w_i \rangle _Q}}^{-x_i}\big)^{\max (-\frac{1}{x_i q_i})}\\& \leq \big(\eta^{{x_1+x_2+x_3}}[\vec{v}]_{A_{\vec{q},\vec{p}}})^{\max (-\frac{1}{x_i q_i})}.\end{aligned}
$$
Finally note that, by \cite{Culiuc2018}, the first product depends on the $L^{q_j}(w_j)$-boundedness of $M_{p_j,w_j} $, where
$$ 
M_{p_j,w_j}f(x) = \sup\limits_{Q\ni x} \Big(\frac{1}{|w(Q)|} \int_{Q}|f|^{p_j}w_j \Big)^{\frac{1}{p_j}}.
$$
This concludes the proof of \eqref{5.1.Zhidan}
\end{proof}

\begin{proof}[Proof of Corollary \ref{collary1}]
	For $2<p<\infty$, let $\sigma=w^\frac{-2}{2-p}$, 
	 $\rho=\frac{p}{p-2}$  and choose $q_i> \max\{\frac{24n+3r-4}{8n+3r-4}, \frac{24n+r}{8n+r}\}$ such that $q_i <\rho$, and $q_i<p$.
	By Theorem \ref{thm1} and duality, it is enough to prove that 
	for any sparse collection $\mathcal S$, we have
	$$
	\mathsf{PSF}_{\mathcal  S}^{(q_1,q_2,q_3)}(f_1,f_2,f_3)\lesssim 
 \prod\limits_{i=1}^2\|f_i\|_{L^{p}(w)}\|f_3\|_{L^{\rho}(\sigma) }
	$$
	with bounds independent of $\mathcal S$. 
	The proof of this fact is omitted as it follows from the same steps as in   Section 5.1 in \cite{Barron2017}.	\end{proof}

Next, we provide another corollary which is related to Corollary 1.7 in \cite{Culiuc2018}.

\begin{corollary}\label{corollary3}
		Suppose $\Omega \in L^{r}(\mathbb{S}^{2n-1})$ with vanishing integral and $r>4/3$. For  $p_1,p_2>\max\{\frac{24n+3r-4}{8n+3r-4}, \frac{24n+r}{8n+r}\}$, $\frac{1}{p}=\frac{1}{p_1}+\frac{1}{p_2}$ with $1<p<\max\{\frac{24n+3r-4}{16n},\frac{24n+r}{16n}\}$. Then for weights $w_1^2 \in A_{p_1}$, $w_2^2 \in A_{p_2}$, $w=w_1^{\frac{p}{p_1}}w_2^{\frac{p}{p_2}}$, there exists a constant $C= C_{w,p_1,p_2,n,r} $ such that
$$
\|T_{\Omega}(f_1,f_2)\|_{L^{p}(w)}\leq C \|\Omega\|_{L^r}\|f_1\|_{L^{p_1}(w_1)}\|f_2\|_{L^{p_2}(w_2)}.
$$
\end{corollary}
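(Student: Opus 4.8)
The plan is to deduce Corollary~\ref{corollary3} from Corollary~\ref{collary2}, the key point being that the hypothesis $w_i^2\in A_{p_i}$ forces the pair $(w_1,w_2)$ into a Li--Martell--Ombrosi class $A_{\vec p,\vec r}$ as in Definition~\ref{def1} (alternatively one could argue directly from Theorem~\ref{thm1} and duality in the spirit of \cite{Culiuc2018}, but the route through Corollary~\ref{collary2} is shorter). Write $p_0:=\max\{\frac{24n+3r-4}{8n+3r-4},\frac{24n+r}{8n+r}\}$ and $p'=p/(p-1)$, and note the identities $(p_0^{(1)})'=\frac{24n+3r-4}{16n}$ and $(p_0^{(2)})'=\frac{24n+r}{16n}$. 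First I would apply Corollary~\ref{collary2} with its input exponents $(q_1,q_2)$ taken to be our $(p_1,p_2)$, its output exponent taken to be $p$, its weights $(v_1,v_2)$ taken to be $(w_1,w_2)$ — so that $\mu_{\vec v}=w_1^{p/p_1}w_2^{p/p_2}=w$ — and its sparse triple taken to be $(s,s,s)$ for a single exponent $s$ with $p_0<s<\min\{p_1,p_2,p'\}$. Under this dictionary the structural requirements of Corollary~\ref{collary2} — $(s,s,s)\prec(p_1,p_2)$, $s>p_0$, $q_3=q'$, and $1<p<\max\{(p_0^{(1)})',(p_0^{(2)})'\}$ — coincide with the index restrictions assumed in the statement, so everything reduces to producing such an $s$ for which $[(w_1,w_2)]_{A_{(p_1,p_2),(s,s,s)}}<\infty$.

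To estimate this characteristic I would unwind Definition~\ref{def1}. Put $\tau_i=p_i/s>1$; using $\frac1s-\frac1{p_i}=\frac{\tau_i-1}{p_i}$, the $i$-th factor of the product is $\big(\langle w_i^{-1/(\tau_i-1)}\rangle_Q^{\tau_i-1}\big)^{1/p_i}$ and the $w$-factor is $\big(\langle w^{s'/(s'-p)}\rangle_Q^{(s'-p)/s'}\big)^{1/p}$. This is exactly where the exponent $2$ matters: the value $s=\frac{2p_i}{p_i+1}$ turns $-\frac1{\tau_i-1}$ into $-\frac2{p_i-1}$, the dual exponent attached to $w_i^2\in A_{p_i}$; hence, choosing $s$ slightly below $\min_i\frac{2p_i}{p_i+1}$, one has $w_i^{-1/(\tau_i-1)}=(w_i^{-2/(p_i-1)})^{\theta_i}$ with $0<\theta_i<1$, and Jensen together with the $A_{p_i}$-bound for $w_i^2$ and the Cauchy--Schwarz inequality $\langle w_i\rangle_Q\le\langle w_i^2\rangle_Q^{1/2}$ yields $\langle w_i^{-1/(\tau_i-1)}\rangle_Q^{\tau_i-1}\le[w_i^2]_{A_{p_i}}^{1/2}\langle w_i\rangle_Q^{-1}$. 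Moreover $w_i^2\in A_{p_i}\subset A_\infty$ forces $w=w_1^{p/p_1}w_2^{p/p_2}\in A_\infty$ with a quantitative reverse Hölder exponent, so as long as $s'/(s'-p)$ lies within that exponent (which holds once $s<p'$) one gets $\langle w^{s'/(s'-p)}\rangle_Q^{(s'-p)/s'}\lesssim\langle w\rangle_Q\le\langle w_1\rangle_Q^{p/p_1}\langle w_2\rangle_Q^{p/p_2}$, the last step by Hölder with the conjugate exponents $p_1/p,p_2/p$. Multiplying these bounds, the powers of $\langle w_i\rangle_Q$ cancel and one is left with $[(w_1,w_2)]_{A_{(p_1,p_2),(s,s,s)}}\lesssim[w_1^2]_{A_{p_1}}^{1/(2p_1)}[w_2^2]_{A_{p_2}}^{1/(2p_2)}<\infty$.

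Finally, feeding this into Corollary~\ref{collary2} yields $\|T_\Omega(f_1,f_2)\|_{L^p(w)}\le C\|\Omega\|_{L^r}[(w_1,w_2)]^{\max_i p_i/(p_i-s)}_{A_{(p_1,p_2),(s,s,s)}}\|f_1\|_{L^{p_1}(w_1)}\|f_2\|_{L^{p_2}(w_2)}$, and the finite characteristic is absorbed into $C=C_{w,p_1,p_2,n,r}$. Everything except the choice of $s$ is the routine weighted bilinear sparse-form bookkeeping already carried out in the proof of Corollary~\ref{collary2}; the one genuinely delicate point — and the main obstacle — is the simultaneous index arithmetic, namely exhibiting a single $s$ with $p_0<s<\min\{p_1,p_2,p',\frac{2p_1}{p_1+1},\frac{2p_2}{p_2+1}\}$ and with $s'/(s'-p)$ small enough for the reverse Hölder inequality for $w$. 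Checking that the stated ranges $p_1,p_2>p_0$ and $1<p<\max\{(p_0^{(1)})',(p_0^{(2)})'\}$ — which in particular require $p_0<2$, hence a lower bound on $r$ — make such an $s$ available is what needs the most care.
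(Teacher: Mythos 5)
The paper offers no proof of this corollary (it is stated with only the pointer to Corollary 1.7 of \cite{Culiuc2018}), so your reduction to Corollary~\ref{collary2} has to be judged on its own terms. The part you compute is fine: with sparse tuple $(s,s,s)$ and $s\le \frac{2p_i}{p_i+1}$, Jensen, the $A_{p_i}$ bound for $w_i^2$ and Cauchy--Schwarz do give $\langle w_i^{-1/(p_i/s-1)}\rangle_Q^{p_i/s-1}\le [w_i^2]_{A_{p_i}}^{1/2}\langle w_i\rangle_Q^{-1}$. But the index arithmetic, which you yourself flag as the delicate point, does not close on the stated range. You need $s>p_0:=\max\{\frac{24n+3r-4}{8n+3r-4},\frac{24n+r}{8n+r}\}$ to invoke Theorem~\ref{thm1}/Corollary~\ref{collary2}, and $s<\frac{2p_i}{p_i+1}<2$; this forces $p_0<2$, i.e. $r>8n$. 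The corollary, however, is stated for all $r>4/3$, and its hypotheses are perfectly satisfiable when $p_0\ge 2$ (take $p_1=p_2=2q$ with $p_0/2<q<p_0'$), so your closing claim that the stated ranges ``require $p_0<2$'' is wrong. Nor is this just an artifact of your choice of $s$: if $s>\frac{2p_i}{p_i+1}$ then $\frac{1}{p_i/s-1}>\frac{2}{p_i-1}$, and the weights $w_i=|x|^{a_i}$ with $n(\frac{p_i}{s}-1)\le a_i<\frac{n(p_i-1)}{2}$ satisfy $w_i^2\in A_{p_i}$ while $\langle w_i^{-1/(p_i/s-1)}\rangle_Q=\infty$ on cubes containing the origin, so the $A_{(p_1,p_2),(s,s,s)}$ characteristic is infinite for every tuple admissible in Corollary~\ref{collary2}. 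Hence for $\frac43<r\le 8n$ the corollary cannot be reached by this route at all.

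The second gap is the reverse-H\"older step for the $w$-factor. The condition $s<p'$ only makes $t=\frac{s'}{s'-p}$ finite; it does not place $t$ in the reverse-H\"older range of $w$. What the hypotheses guarantee is $w_i\in RH_2$ (since $w_i^2\in A_\infty$ gives $\langle w_i^2\rangle_Q^{1/2}\lesssim \exp\langle\log w_i\rangle_Q\le\langle w_i\rangle_Q$), plus only a weight-dependent self-improvement; after the H\"older splitting $\langle w^t\rangle_Q\le\langle w_1^t\rangle_Q^{p/p_1}\langle w_2^t\rangle_Q^{p/p_2}$ this controls the $w$-factor only for $t\le 2$, i.e. $s\le\frac{2p}{2p-1}$, which together with $s>p_0$ requires $p<p_0'/2$ --- well short of the stated upper bound on $p$. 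Again the obstruction is intrinsic to the reduction: for $p_0'/2<p<p_0'$ take $w_1=w_2=|x|^{-b}$ with $n\frac{p_0'-p}{p_0'}\le b<\frac n2$; then $w_i^2\in A_{p_i}$, but every admissible third sparse exponent exceeds $p_0$, so $\frac{s'}{s'-p}>\frac{p_0'}{p_0'-p}\ge \frac nb$ and $\langle w^{s'/(s'-p)}\rangle_Q=\infty$, i.e. the characteristic is again infinite. So your argument establishes the corollary only under the stronger restrictions $r>8n$, $p_i>\frac{p_0}{2-p_0}$ and essentially $p<p_0'/2$; to cover the stated range one would have to estimate the sparse form directly from the $A$- and $RH$-properties of $w_1,w_2$ in the spirit of the proof of Corollary 1.7 in \cite{Culiuc2018} (or else the hypotheses of the corollary need strengthening), rather than verify membership in the $A_{\vec q,\vec p}$ class of Corollary~\ref{collary2}.
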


We end this section with another corollary concerning the commutator of a rough $T_\Omega$ with a pair of BMO functions $\vec b = (b_1,b_2)$. For a pair $\vec \alpha = (\alpha_1, \alpha_2)$ of nonnegative integers, we define this commutator 
(acting on a pair of nice functions $f_j$) as follows: 
\[
\big[T_{\Omega}, \vec b\, \big]_{\vec \alpha}  (f_1,f_2) (x) =\textup{p.v.} \int_{\mathbb R^{2n} } \frac{ \Omega ((y_1,y_2)')}{|(y_1,y_2)|^{2n} }
f_1(x-y_1) f_2(x-y_2)\prod_{i=1}^2(b_i(x)-b_i(y_i))^{\alpha_i}    dy_1 dy_2
\]
As a consequence of Proposition 5.1 in \cite{LMO2018} and of Corollary~\ref{collary2}, 

\begin{corollary}\label{collary-last}
Let $\Omega \in L^{r}(\mathbb{S}^{2n-1})$ with $r>4/3$ and  $\int _{\mathbb{S}^{2n-1}}\Omega \, d\sigma=0$.  Let $\vec{q}=(q_1,q_2)$, $\vec{p}=(p_1,p_2,p_3)$ with $\vec{p}\prec \vec{q}$ and $p_i>\max\{\frac{24n+3r-4}{8n+3r-4}, 	\frac{24n+r}{8n+r} \}$,  $i=1,2,3$. Let 
$$
\mu_{\vec{v}}=\prod_{k=1}^{2}v_k^{q/q_k}
$$
 and $\frac{1}{q}=\frac{1}{q_1}+\frac{1}{q_2}$, $1< q <\max\{\frac{24n+3r-4}{16n}, \frac{24n+r}{16n}\}$ and 
let $q_3=q'$. Then
there is a constant $C=C_{\vec p ,\vec q, r,n, \vec \alpha}$ such that
	$$
	\big\| \big[T_{\Omega}, \vec b\, \big]_{\vec \alpha} (f,g)\big\|_{L^{q}(\mu_{\vec{v}})}\leq C \|\Omega\|_{L^r} [\vec{v}]^{\max_{1 \leq i\leq 3}\{\frac{p_i}{q_i-p_i}\}}_{A_{\vec q,\vec p}}   \|f\|_{L^{q_1}(v_1)}\|g\|_{L^{q_2}(v_2)}  \prod_{i=1}^2\| b_i\|_{BMO}^{\alpha_i} .
	$$
\end{corollary}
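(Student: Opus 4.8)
The plan is to deduce Corollary~\ref{collary-last} from Corollary~\ref{collary2} via a general transference principle for multilinear commutators, namely Proposition 5.1 in \cite{LMO2018}. That proposition asserts, roughly, that if a multilinear operator $T$ and all of its ``formal transposes'' satisfy a quantitative weighted bound of the type appearing in Corollary~\ref{collary2} uniformly over the relevant class $A_{\vec q,\vec p}$, then the iterated commutators $[T,\vec b\,]_{\vec\alpha}$ with $\vec b\in BMO$ satisfy the same weighted bound, with an extra factor $\prod_i\|b_i\|_{BMO}^{\alpha_i}$ and a constant depending additionally on $\vec\alpha$. So the first step is simply to record that $T_\Omega$ meets the hypotheses of that proposition: by Corollary~\ref{collary2} we have the required $L^{q_1}(v_1)\times L^{q_2}(v_2)\to L^q(\mu_{\vec v})$ estimate with the displayed dependence $[\vec v]_{A_{\vec q,\vec p}}^{\max_i\{p_i/(q_i-p_i)\}}$, and the hypotheses $\vec p\prec\vec q$, $p_i>\max\{\frac{24n+3r-4}{8n+3r-4},\frac{24n+r}{8n+r}\}$, $1<q<\max\{\frac{24n+3r-4}{16n},\frac{24n+r}{16n}\}$, $q_3=q'$ are exactly those imposed here.

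Second, I must check that the transposes of $T_\Omega$ fall under the same umbrella so that the symmetric hypotheses of the LMO proposition are available. The two adjoints of $T_\Omega$ are again rough bilinear singular integrals whose kernels are built from reflections/permutations of $\Omega$ composed with linear maps of $\mathbb R^{2n}$ of bounded distortion; this is precisely the mechanism already exploited in the proof of Lemma~\ref{p2} (the matrices $A$ and $\begin{pmatrix}I_n&0\\-I_n&-I_n\end{pmatrix}$), which shows that $|(\xi,\eta)|$ and its image have comparable size, so the associated $\Omega^{*}$ still lies in $L^r(\mathbb S^{2n-1})$ with comparable norm. Hence Corollary~\ref{collary2} applies verbatim to each transpose, with the same quantitative exponent, and the full set of hypotheses of Proposition 5.1 in \cite{LMO2018} is in force.

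Third, once the machinery is invoked, one simply reads off the conclusion: the commutator $[T_\Omega,\vec b\,]_{\vec\alpha}$ is bounded from $L^{q_1}(v_1)\times L^{q_2}(v_2)$ into $L^q(\mu_{\vec v})$ with the displayed constant. The BMO factor and the dependence of $C$ on $\vec\alpha$ are built into the statement of the abstract commutator theorem, and the $A_{\vec q,\vec p}$-quantitative factor is inherited unchanged from Corollary~\ref{collary2} (the commutator theorem does not worsen the exponent of $[\vec v]$, only multiplies the constant). This yields exactly the claimed inequality, completing the proof.

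The main obstacle, and the only point requiring genuine care, is the verification that the transposes of $T_\Omega$ satisfy the same weighted bounds with the same quantitative dependence on $[\vec v]_{A_{\vec q,\vec p}}$ — i.e., that the exponent $\max_i\{p_i/(q_i-p_i)\}$ is genuinely symmetric under the relevant permutations of the indices, so that applying Corollary~\ref{collary2} to the transposed operator produces a constant of the same form rather than a larger one. Because the index constraints are themselves symmetric in $p_1,p_2,p_3$ (with $q_3=q'$ playing the role of the third slot), this works out, but it should be stated explicitly. Everything else is a direct citation of Proposition 5.1 in \cite{LMO2018}.
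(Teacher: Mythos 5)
Your proposal follows essentially the same route as the paper, which obtains the corollary directly by combining Corollary~\ref{collary2} with Proposition 5.1 of \cite{LMO2018} (the abstract commutator/extrapolation result for the classes $A_{\vec q,\vec p}$), so the argument is correct in substance. The only remark is that your ``main obstacle'' about the transposes of $T_{\Omega}$ is not actually required by that proposition, whose hypotheses only ask for the weighted bound for $T_{\Omega}$ itself over the whole class $A_{\vec q,\vec p}$ (the Cauchy--integral conjugation trick perturbs the weights, not the operator); in any case the transposes do satisfy the same bounds, since the sparse trilinear bound of Theorem~\ref{thm1} is symmetric in the three entries, so your extra verification is harmless.
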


\end{document}